\newtheorem{definition}{Definition}
\newtheorem{remark}{Remark}
\newtheorem{theorem}{Theorem}
\newtheorem{lemma}{Lemma}
\newtheorem{proposition}{Proposition}
\newtheorem{cor}{Corollary}
\def\pione{\pi_1(\mathcal{G},T)}
\def\overpione{\overline{\pi}_1(\mathcal{G},T)}
\def\deriv{\mathbf{d}_{\mathcal{G}}}
\title{Virtually free groups are $p$-Schatten stable}
\author{Maria Gerasimova \and
  Konstantin Shchepin}
\newcommand{\Addresses}{{
  \bigskip
  \footnotesize
  \medskip
  Maria Gerasimova, \textsc{WWU M{\"u}nster}\par\nopagebreak
  \textit{E-mail address} : \texttt{mari9gerasimova@mail.ru}
  
\medskip

  Konstantin Shchepin \textsc{}\par\nopagebreak
  \textit{E-mail address} : \texttt{shchepin.konstantin@gmail.com}
}}
\begin{document}
\vspace{-8ex}
\date{}
\maketitle

\begin{abstract}

In \cite{lazarovich2021virtually} was proven that finitely generated virtually free groups are stable in permutations. In this note we show that a similar strategy can be used to prove that finitely generated virtually free groups are stable with respect to a normalized $p$-Schatten norm for $1\leq p < \infty$. In particular, this implies that virtually free groups are Hilbert-Schmidt stable.

\end{abstract}

\section{Introduction}
Given a system of equations in noncommutative  variables one can ask if it is \textquote{stable}, meaning that each of its \textquote{almost} solutions is \textquote{close} to an actual solution. 
For variables $s_i$ and equations of the form $\prod_k s_{i_k}=1$ this \textquote{stability} property is a property of the group generated by $s_i$ with relations given by these equations -- it does not depend on the sets of generators and relations.  
To make this into a concrete problem one needs to specify what exactly is considered as a solution and what \textquote{almost} and \textquote{close} mean. 
For example, as a solution one can consider a set of permutations or a set of matrices satisfying some additional conditions. 
In this paper we will focus on the set of unitary matrices of arbitrary size equipped with a normalized $p$-Schatten norm for $1\leq p<\infty$. 
When $p=2$ this norm is called the Hilbert-Schmidt norm.

The systematic study of stability with respect to the Hilbert-Schmidt norm was taken in \cite{hadwin2018stability}, where  in particular was given a characterization of stability of amenable groups in terms of the approximation property for characters. 
More precisely, it was proven that an amenable group is stable with respect to the Hilbert-Schmidt norm if and only if each character is a pointwise limit of traces of finite-dimensional representations. 
Using this characterization it was shown that all finitely generated virtually abelian groups as well as the discrete Heisenberg group $\mathbb{H}^3$ are Hilbert-Schmidt stable. 
For non-amenable groups this approximation property was shown to be only a necessary condition. In particular, $SL_3(\mathbb{Z})$ satisfies this property, but is not Hilbert-Schmidt stable (see \cite{hadwin2018stability},\cite{becker2020group}).

Let us observe that Hilbert-Schmidt stability is preserved under taking free products (obviously) and direct products with Hilbert-Schmidt stable amenable groups (see \cite{ioana2019ii1}). Other examples of Hilbert-Schmidt stable groups include all one-relator groups with non-trivial center (see \cite{hadwin2018stability}).

In \cite{lazarovich2021virtually} was proven that finitely generated virtually free groups are stable in permutations. 
Although the normalized Hamming distance can be expressed using the Hilbert-Schmidt distance,  this cannot be used directly to deduce Hilbert-Schmidt stability from stability in permutations. 
Despite this, we show that the similar strategy can be used to prove that finitely generated virtually free groups are stable with respect to a $p$-Schatten norm for $1\leq p<\infty$. In particular, they are Hilbert-Schmidt stable.

Let us mention that in \cite{eilers2020c} was shown that for an amenable group operator norm stability implies Hilbert-Schmidt stability.  Virtually free groups are known to be operator norm stable, but since they are not amenable it does not give us Hilbert-Schmidt stability.


\section{Preliminaries}
\subsection{$p$-Schatten stability of finite groups}
Let $W$ be some finite-dimensional Hilbert space and let us fix some $1\leq p <\infty$.

\begin{definition}
For any operator $A\colon W \to W$ let us define the $p$-Schatten norm by 
$$||A||_{p}\coloneqq(tr(|A|^p))^{\frac{1}{p}},$$
where $|A|=\sqrt{A^*A}$.
\end{definition}

\begin{definition}
For any operator $A\colon W \to W$ let us define the normalized  $p$-Schatten norm by 
$$||A||'_{p}\coloneqq \left(tr \left(\frac{|A|^p}{\dim W}\right)\right)^{\frac{1}{p}}.$$
\end{definition}

Let us note that $||\cdot||_2$ is called the \textit{Frobenius norm} and $||\cdot||'_2$ is called the \textit{Hilbert-Schmidt norm}. \\
Recall that any operator $A\colon W\to W$ can be decomposed as $$A=U^*\Lambda V,$$ where $U,V\in {\rm U}(W)$ and 
$\Lambda=diag(\lambda_i)$, $\lambda_i \in \mathbb{R}_{\ge0}$. $\lambda_i$ are called \textit{singular values} of $A$ and they are uniquely defined up to a permutation.\\
One can compute the $p$-Schatten norm as 
$$||A||_p=\left(\sum_i \lambda_i^p\right)^{\frac{1}{p}}.$$ 
From now on let $G=\langle S, R \rangle$ be a finitely presented group, $F_S$ be the free group generated by $S$. We recall some basic definitions.

\begin{definition}
A sequence of homomorphisms $\varphi_n\colon F_S\to U(W_n)$, where $W_n$ is a sequence of finite-dimensional Hilbert spaces, is called an asymptotic homomorphism of $G$ with respect to the normalized $p$-Schatten norm if for all $r\in R$
$$\lim_n||\varphi_n(r)-I_n||'_p=0,$$
where $I_n$ is an identity operator on $W_n$. 
\end{definition}

\begin{definition}
We will say that a group $G$ is stable with respect to the normalized $p$-Schatten norm if for any asymptotic homomorphism $\rho_n\colon F_S\to U(W_n)$ there exists a sequence of representations $\rho'_n\colon G\to U(W_n)$ such that for all $s\in S$
$$ \lim_n ||\rho_n(s)-\rho'_n(s)||'_p=0.$$ 
\end{definition}

\begin{definition}
We will say that a group $G$ is flexibly stable with respect to the normalized \mbox{$p$-Schatten} norm if for any asymptotic homomorphism $\rho_n\colon F_S\to U(W_n)$ there exists a sequence of representations $\rho'_n\colon G\to U(W_n')$, $W_n\subset W'_n$ such that for all $s\in S$
$$ \lim_n ||\rho_n(s)-P\rho'_n(s)P||'_p=0,$$ 
where $P$ is an orthogonal projection $P\colon W_n'\to W_n$ and $\frac{\dim(W'_n)}{\dim(W_n)} \to 1$ as $n \to \infty$.
\end{definition}

Flexible stability of finite groups with respect to a normalized $p$-Schatten norm for $1\leq p \leq 2$ follows from Theorem 6.9 in \cite{gowers2017inverse}. Later the stability result of \cite{gowers2017inverse} was generalized in \cite{de2019operator} to the class of amenable groups with respect to unitary groups of von Neumann algebras equipped with any unitarily invariant, ultraweakly lower semi-continuous semi-norm. In particular, this gives flexible stability of finite groups with respect to a normalized $p$-Schatten norm for $1\leq p <\infty$. \\
One can show that in the case of finite groups flexible stability with respect to a $p$-Schatten norm implies stability. For $p=2$ it follows from Theorem 3.2 in \cite{akhtiamov2020uniform}. This theorem can be generalized for all ${1\leq p < \infty}$ using the basic inequality $ ||BAC^*||_p \leq ||A||_p||B||_{op}||C||_{op}$, where ${A\in M_{m\times m}(\mathbb{C})}$ and ${B,C \in M_{n\times m}(\mathbb{C})}$ (see Lemma 6.1 in \cite{gowers2017inverse}). 
\begin{cor}
Any finite group is stable with respect to a normalized $p$-Schatten norm for ${1\leq p <\infty}$.
\end{cor}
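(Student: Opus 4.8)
The plan is to realize the two-step scheme sketched above. First I would invoke flexible stability of finite groups with respect to $\|\cdot\|'_p$ — available from \cite{de2019operator} (which generalizes \cite{gowers2017inverse}) for all $1\le p<\infty$ — and then upgrade it to honest stability, which for $p=2$ is Theorem~3.2 of \cite{akhtiamov2020uniform}. So fix $1\le p<\infty$, a finite group $G$, and an asymptotic homomorphism $\rho_n\colon F_S\to{\rm U}(W_n)$ of $G$; flexible stability hands us representations $\sigma_n\colon G\to{\rm U}(V_n)$ with $W_n\subseteq V_n$, $\dim V_n/\dim W_n\to1$, and $\|\rho_n(s)-P_n\sigma_n(s)P_n\|'_p\to0$ for all $s\in S$, where $P_n\colon V_n\to W_n$ is the orthogonal projection. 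Write $d_n=\dim V_n-\dim W_n=o(\dim W_n)$. The task is to turn $\sigma_n$ into a genuine representation $\rho'_n\colon G\to{\rm U}(W_n)$ on the \emph{same} space with $\|\rho_n(s)-\rho'_n(s)\|'_p\to0$.

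The heart of the matter is to carve out of $\sigma_n$ a sub-representation of dimension exactly $\dim W_n$ and then move it onto $W_n$ with small distortion. To make the dimensions cooperate I would first replace $\sigma_n$ by $\widehat\sigma_n=\sigma_n\oplus\lambda_G^{\oplus|G|}$ on $\widehat V_n=V_n\oplus\mathbb C^{|G|^2}$, where $\lambda_G$ is the left regular representation of $G$: this changes the dimension by the constant $|G|^2$, so $\widehat\sigma_n$ is still a flexible solution and $\widehat d_n:=\dim\widehat V_n-\dim W_n=o(\dim W_n)$, while now every irreducible representation of $G$ occurs in $\widehat\sigma_n$ with multiplicity at least $|G|$. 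A short induction then shows that \emph{every} integer in $[0,\dim\widehat V_n]$ is the dimension of some sub-representation of $\widehat\sigma_n$: using only the (at least $|G|$) trivial summands one realizes the whole interval $[0,|G|]$, and each further irreducible $\pi$ has $\dim\pi\le|G|$, so it never creates a gap. Pick a $\widehat\sigma_n$-invariant $U_n\subseteq\widehat V_n$ with $\dim U_n=\dim W_n$. Because $\dim(W_n\cap U_n)\ge\dim W_n+\dim U_n-\dim\widehat V_n=\dim W_n-\widehat d_n$, I can choose an isometry $J_n\colon W_n\to U_n$ that is the identity on $W_n\cap U_n$; since $U_n$ is invariant, $\rho'_n(s):=J_n^*\widehat\sigma_n(s)J_n$ is a genuine unitary representation of $G$ on $W_n$.

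Finally I would estimate the error. With $\iota_n\colon W_n\hookrightarrow\widehat V_n$ the inclusion (so $\iota_n^*\widehat\sigma_n(s)\iota_n=P_n\sigma_n(s)P_n$) and $\Delta_n=\iota_n-J_n$, one has
\[
\rho_n(s)-\rho'_n(s)=\bigl(\rho_n(s)-\iota_n^*\widehat\sigma_n(s)\iota_n\bigr)+\iota_n^*\widehat\sigma_n(s)\Delta_n+\Delta_n^*\widehat\sigma_n(s)J_n .
\]
The first term goes to $0$ in $\|\cdot\|'_p$ by construction. Since $\Delta_n$ vanishes on $W_n\cap U_n$, it has rank at most $\widehat d_n$ and operator norm at most $2$; applying $\|XY\|_p\le\|X\|_{op}\|Y\|_p$ together with $\|A\|_p\le(\operatorname{rank}A)^{1/p}\|A\|_{op}$ (special cases of Lemma~6.1 of \cite{gowers2017inverse}) bounds the last two terms by $2\widehat d_n^{\,1/p}$ each, i.e.\ by $O\bigl((\widehat d_n/\dim W_n)^{1/p}\bigr)\to0$ after normalizing. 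Hence $\|\rho_n(s)-\rho'_n(s)\|'_p\to0$ and $G$ is stable with respect to $\|\cdot\|'_p$.

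I expect the only real obstacle to be the dimension-matching step — guaranteeing that $\sigma_n$ contains a sub-representation of precisely the right dimension, which is exactly what the enrichment by copies of the regular representation is for — together with keeping all the rank-$o(\dim W_n)$, operator-norm-bounded corrections negligible in the normalized $p$-Schatten norm uniformly in $n$; this last point is where the $p=2$ orthogonality argument of \cite{akhtiamov2020uniform} is replaced by the Schatten inequalities of \cite{gowers2017inverse}.
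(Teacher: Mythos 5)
Your proposal is correct and follows exactly the route the paper takes: flexible stability of finite groups from \cite{gowers2017inverse} and \cite{de2019operator}, upgraded to genuine stability by a corner-cutting argument in the spirit of Theorem~3.2 of \cite{akhtiamov2020uniform}, with the $p=2$ estimates replaced by the rank and operator-norm Schatten inequalities of Lemma~6.1 of \cite{gowers2017inverse}. The paper only sketches this implication by citation, whereas you supply the details (in particular the padding by copies of the regular representation to guarantee an invariant subspace of exactly the right dimension), and your argument checks out.
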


\subsection{ Graph of groups}

We will briefly recall the main definitions and notations which we will use further.
Using Serre's notation for graphs, we will say that a graph $\Gamma$ consists of a set vertices $V(\Gamma)$ and a set of oriented edges $E(\Gamma)$, moreover, each edge has an origin $o(e)\in V(\Gamma)$ and a terminus $t(e)\in V(\Gamma)$ and admits a distinct opposite edge $\overline{e}\in E(\Gamma)$ that satisfies $\overline{\overline{e}}=e$, $t(\overline{e})=o(e)$ and $o(\overline{e})=t(e)$. In this notation an orientation of the graph $\Gamma$ is just a subset $\vec{E}(\Gamma)\subset E(\Gamma)$ containing exactly one edge from each pair of opposite edges $\{e,\bar{e}\}$. 

\begin{definition}
A graph of groups $\mathcal{G}$ is 
$$\mathcal{G}=\left(\Gamma,\{G_v\}_{v\in V(\Gamma)},\{G_e\}_{e\in E(\Gamma)},\{i_e\colon G_e\to G_{t(e)} \}_{e \in E(\Gamma)}\right)$$
where $\Gamma$ is a connected graph, $G_v$ and $G_e$ -- vertex and edge groups correspondingly with $G_e=G_{\overline{e}}$ and $i_e \colon G_e\to G_{t(e)}$ are injective homomorphisms.  
\end{definition}

Let $\mathcal{G}$ be a graph of groups. Fix a subtree $T\subset \Gamma$ and an orientation $\vec{E}(\Gamma)$, this gives the induced orientation $\vec{E}(T)$. 
Consider the group $\overpione$ defined as the free product 
$$\overpione=*_{v\in V(\Gamma)}G_v*F\left(\{s_e\}_{e\in \vec{E}(\Gamma)}\right)$$ 
We will also fix the generating set of $\overpione$
$$S_{\mathcal{G}}=\bigcup_{v\in V(\Gamma)}G_v\cup \{s_e\}_{e\in\vec{E}(\Gamma)}.$$

\begin{definition}
The fundamental group $\pione$ of the graph of groups $\mathcal{G}$ with respect to the subtree $T$ is the quotient of the free product $\overpione$ by the normal subgroup generated by the relations
$$R_{\mathcal{G}}=
\begin{cases}
s_e  &  \forall e\in \vec{E}(T),\\
s_e^{-1}i_e(g_e)s_e\left(i_{\overline{e}}(g_e)\right)^{-1} & \forall e\in \vec{E}(\Gamma), \, g_e\in G_e.
\end{cases}
$$
\end{definition} 
In the definition above one can use any subtree $T$ of $\Gamma$ but for us $T$ will always be a maximal spanning tree because of the following results.

\begin{remark}
\label{indep}
The fundamental group $\pione$ as well as the group $\overpione$ are independent of the choice of a maximal spanning tree T up to isomorphism. 
\end{remark}

\begin{theorem}[Stallings, \cite{stallings1968torsion}]
The fundamental group $\pi_1(\mathcal{G},T)$ of a finite graph of groups $\mathcal{G}$ with finite vertex groups with respect to any maximal spanning tree $T$ is virtually free. Any finitely generated virtually free group can be constructed this way. 
\end{theorem}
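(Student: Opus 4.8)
The statement has two halves, and my plan is to treat them separately.

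\emph{First half: $G:=\pione$ is virtually free.} I would begin from Bass--Serre theory: $G$ acts without inversions on the Bass--Serre tree $X$ associated with $\mathcal{G}$, the quotient $X/G$ is the finite graph $\Gamma$, and every vertex and every edge stabilizer is conjugate to one of the finite groups $G_v$ or $G_e$. Since a group acting freely on a tree is free, it suffices to find a finite-index subgroup $H\le G$ acting freely on $X$, for then $H\cong\pi_1(X/H)$ is free with $[G:H]<\infty$. To produce $H$, I would construct a homomorphism $q\colon G\to Q$ onto a finite group that is injective on each of the finitely many finite vertex groups $G_v$; then $H:=\ker q$ meets every point stabilizer (each being conjugate to some $G_v$) trivially, hence acts freely on $X$. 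The existence of such a $q$ is precisely the assertion that $G$ is virtually torsion-free: in the model case $G=A\ast_C B$ with $A,B$ finite one can take $Q$ to be a finite group receiving $A$ and $B$ compatibly over $C$ (for instance a suitable finite quotient of $G$ itself, or a symmetric group acting on coset spaces), and the general amalgam/HNN situation is the same argument with more bookkeeping. I expect this bookkeeping over nontrivial edge groups to be the only mildly technical point in this direction.

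\emph{Second half: every finitely generated virtually free $G$ has this form.} Passing to the normal core, I may assume $G$ has a free \emph{normal} subgroup $F$ of finite index. If $G$ is finite it is a one-vertex graph of groups, so suppose $G$ is infinite; then $F$ is infinite free, hence has at least two ends, and since the number of ends is a commensurability invariant $G$ has more than one end. By Stallings' theorem on ends, $G$ then splits nontrivially as an amalgam $A\ast_C B$ or as an HNN extension with finite edge group $C$, the vertex groups $A,B$ being finitely generated. Each vertex group is a subgroup of $G$, hence finitely generated and virtually free (a subgroup of a virtually free group is virtually free), so I would recurse on the vertex groups and assemble the resulting splittings into a single finite graph of groups; by Remark~\ref{indep} the spanning tree may then be taken to be any maximal one. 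Once the recursion stops, every vertex group has at most one end, and being finitely generated virtually free it must therefore be finite, while all edge groups are finite by construction --- which is exactly the desired description of $G$ as $\pione$ for a finite graph of groups with finite vertex groups.

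The crux of the second half, and the step I expect to be the main obstacle, is \emph{termination} of this recursion: \emph{a priori} one could keep splitting over finite subgroups indefinitely. That this cannot happen is a form of accessibility --- for finitely generated virtually free groups it goes back to Karrass--Pietrowski--Solitar, and in general it is Dunwoody's accessibility theorem; alternatively one can try to exhibit a complexity of $G$ (built, say, from the rank of $F$ and the index $[G:F]$, or from the rational Euler characteristic) that strictly decreases under every nontrivial splitting over a finite subgroup, which forces the process to stop. Everything else in both halves is either Bass--Serre bookkeeping or an application of Stallings' ends theorem, so the proof plan really hinges on this single accessibility input.
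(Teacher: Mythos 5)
The paper offers no proof of this statement at all: it is quoted as a classical black-box result, attributed to Stallings (and, for the converse direction, really to Karrass--Pietrowski--Solitar), and everything the paper actually proves takes it as input. So there is nothing internal to compare your argument against; what I can say is that your outline is the standard modern proof and is essentially correct. For the first half, the reduction via Bass--Serre theory to finding a finite-index subgroup meeting all (finite) point stabilizers trivially is right, and your proposed construction of a finite quotient injective on the vertex groups does work --- e.g.\ for an HNN extension of a finite group $A$ over $C\cong C'$ one conjugates the two free actions of $C$ and $C'$ on $A$ inside $\mathrm{Sym}(A)$, and the general case is an induction over edges --- though this ``virtual torsion-freeness of finite graphs of finite groups'' is itself a genuine lemma, not a formality. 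For the second half, your recursion via Stallings' ends theorem is the right skeleton, and you correctly isolate termination as the crux; for this specific class one does not need Dunwoody's full accessibility, since the rational Euler characteristic $\chi(G)=(1-\mathrm{rk}\,F)/[G:F]$, computed against $\sum_v |G_v|^{-1}-\sum_e |G_e|^{-1}$ for a reduced decomposition, bounds the number of edges and forces the process to stop. In short: a correct plan whose two flagged gaps are exactly the two places where real theorems are being imported, which is consistent with the paper's decision to cite rather than prove the result.
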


\subsection{Stable epimorphisms}

One of the main definitions of \cite{lazarovich2021virtually} was one of a $P$-stable epimorphism.  We will need a natural analogue of this definition. 
Let $\overline{G}$ be a group generated by a finite set $S$, $N\triangleleft \overline{G}$ be a normal subgroup normally generated by some finite set $R\subset \overline{G}$. Denote $G=\overline{G}/N$. We say that a representation $\rho\colon \overline{G}\to U(W)$ is a $\delta$-almost $G$-representation if for all $r\in R$
$$||\rho(r)-I ||'_p< \delta.$$
For two maps $\rho_1,\rho_2\colon \overline{G}\to U(W)$ and a subset $A\subset \overline{G}$ we will denote by $$d_A(\rho_1,\rho_2)\colon=\max_{g\in A}||\rho_1(g)-\rho_2(g)||'_p.$$

\begin{definition}
We will say that the epimorphism $\phi\colon\overline{G}\to G$ is stable with respect to the normalized $p$-Schatten norm if for every $\varepsilon>0$ there is a $\delta>0$ such that for every $\delta$-almost $G$-representation $\rho\colon \overline{G}\to U(W)$ there is a $G$-representation $\rho'\colon G\to U(W)$ with $d_S(\rho,\phi^*(\rho'))< \varepsilon$.  
\end{definition}

For $\pione$ we will always use the generating set $S_\mathcal{G}$ and the set of relations $R_{\mathcal{G}}$. Similarly to \cite{lazarovich2021virtually} one can get the following lemma.

\begin{lemma}
The stability of the epimorphism is a well-defined notion (i.e. it is independent of the choices of the finite sets $S$ and $R$)
\end{lemma}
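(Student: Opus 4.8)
The plan is to show that if the epimorphism $\phi\colon\overline{G}\to G$ is stable for one choice of finite generating set $S$ and finite normal generating set $R$ of $N$, then it is stable for any other such choice $S'$, $R'$. The argument is the standard "change of generators" manipulation, carried out carefully with the normalized $p$-Schatten norm in place of the Hamming metric, using throughout the submultiplicative inequality $\|AB-I\|'_p\le\|A-I\|'_p+\|B-I\|'_p$ (valid since multiplication by a unitary preserves $\|\cdot\|'_p$, so $\|AB-I\|'_p=\|AB-B\|'_p+\text{triangle}$, more precisely $\|AB-I\|'_p\le\|AB-B\|'_p+\|B-I\|'_p=\|A-I\|'_p+\|B-I\|'_p$) together with $\|A^{-1}-I\|'_p=\|I-A\|'_p$. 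These two facts let us control $\|\rho(w)-\rho'(w)\|'_p$ for any word $w$ by a constant (depending only on $w$) times $d_S(\rho,\rho')$, and similarly control how "almost" a representation is under a change of relation set.

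First I would fix notation: let $S,S'$ be two finite generating sets, $R\subset\overline{G}$ and $R'\subset\overline{G}$ two finite subsets, each normally generating $N$. Each $s'\in S'$ is a word $w_{s'}$ in $S^{\pm1}$, and vice versa; let $L$ be the maximum length of all these words (finitely many of them). Similarly, since $R'$ normally generates $N=\langle\!\langle R\rangle\!\rangle$, each $r'\in R'$ can be written as a product of finitely many conjugates $g_j r_{i_j}^{\pm1} g_j^{-1}$ with $g_j\in\overline{G}$ expressed as words in $S^{\pm1}$; let $M$ bound the number of conjugates and the lengths of the conjugating words over all $r'\in R'$. Both $L$ and $M$ depend only on the (finite) data $S,S',R,R'$.

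Next, the two implications. (a) \emph{Almost-ness transfers.} If $\rho$ is a $\delta$-almost $G$-representation with respect to $R$, then for each $r'\in R'$, writing $r'=\prod_j g_j r_{i_j}^{\pm1}g_j^{-1}$, we get $\|\rho(r')-I\|'_p\le\sum_j\|\rho(g_j)\rho(r_{i_j})^{\pm1}\rho(g_j)^{-1}-I\|'_p=\sum_j\|\rho(r_{i_j})^{\pm1}-I\|'_p\le M\delta$, since conjugation by a unitary is a $\|\cdot\|'_p$-isometry. Hence a $\delta$-almost $G$-representation w.r.t. $R$ is an $(M\delta)$-almost $G$-representation w.r.t. $R'$, and symmetrically. (b) \emph{Distances transfer.} If $\rho_1,\rho_2\colon\overline{G}\to U(W)$ and $d_S(\rho_1,\rho_2)<\varepsilon$, then for any word $w$ of length $\le L$ in $S^{\pm1}$ we get $\|\rho_1(w)-\rho_2(w)\|'_p\le L\varepsilon$ by the submultiplicative/inverse inequalities above (a telescoping estimate letter by letter); in particular $d_{S'}(\rho_1,\rho_2)\le L\varepsilon$, and symmetrically.

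Finally I would assemble the argument. Suppose $\phi$ is stable w.r.t. $(S,R)$; we prove it is stable w.r.t. $(S',R')$. Given $\varepsilon>0$, apply $(S,R)$-stability with $\varepsilon/L$ to obtain $\delta_0>0$. Set $\delta=\delta_0/M$. Let $\rho\colon\overline{G}\to U(W)$ be a $\delta$-almost $G$-representation w.r.t. $R'$; by (a) it is a $(M\delta)=\delta_0$-almost $G$-representation w.r.t. $R$, so $(S,R)$-stability yields a genuine representation $\rho'\colon G\to U(W)$ with $d_S(\rho,\phi^*(\rho'))<\varepsilon/L$. By (b), $d_{S'}(\rho,\phi^*(\rho'))\le L\cdot(\varepsilon/L)=\varepsilon$, which is exactly $(S',R')$-stability. (One should double-check the role of $\phi^*$ here: $\phi^*(\rho')$ is a map $\overline{G}\to U(W)$ factoring through $\phi$, and all the word-length estimates apply to it verbatim.)

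\textbf{Main obstacle.} The conceptual content is minimal; the only thing requiring care is bookkeeping in step (a) — that $R'$ normally generates $N$ only guarantees each $r'$ is a product of \emph{conjugates} of elements of $R^{\pm1}$, and one must be sure the conjugating elements and the number of factors are bounded uniformly (which they are, since $R'$ is finite), so that a single constant $M$ works for all $\rho$ simultaneously. The analytic inputs — that $\|\cdot\|'_p$ is unitarily bi-invariant and subadditive under products in the sense $\|AB-I\|'_p\le\|A-I\|'_p+\|B-I\|'_p$ — are entirely elementary and replace the trivial triangle-inequality facts used for the Hamming metric in \cite{lazarovich2021virtually}.
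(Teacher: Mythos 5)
Your argument is correct and is exactly the standard change-of-generators/change-of-relators bookkeeping that the paper has in mind: the authors do not write out a proof at all, deferring to the analogous lemma in \cite{lazarovich2021virtually}, and your adaptation of the Hamming-metric estimates to the normalized $p$-Schatten norm (unitary bi-invariance, $\|AB-I\|'_p\le\|A-I\|'_p+\|B-I\|'_p$, and the telescoping word estimate) is the right substitute. The only nitpick is that your constant $M$ should explicitly be taken to bound the expressions in both directions (elements of $R$ as products of conjugates of $(R')^{\pm1}$ and vice versa), since the assembly step uses the direction opposite to the one you wrote out, but this is a trivial relabeling.
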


This definition is motivated by the following fact. 

\begin{remark}
If $\overline{G}$ and an epimorphism $\phi\colon\overline{G}\to G$ are stable with respect to some norm then $G$ is stable with respect to the same norm. 
\end{remark}

In our case all vertex groups of a graph of groups are finite and hence are stable with respect to the normalized $p$-Schatten norm. $\overpione$ is stable as a free product of stable groups. So to prove stability of virtually free groups we need only the following theorem. 

\begin{theorem}
\label{ourmain}
The epimorphism $\overpione \to \pione$ is stable with respect to a $p$-Schatten norm for all $1\le p<\infty$. 
\end{theorem}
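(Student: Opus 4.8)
The plan is to run the argument of \cite{lazarovich2021virtually}, with the normalized Hamming distance replaced everywhere by $||\cdot||'_p$ and with permutation stability of finite groups replaced by the Corollary above; the combinatorics is unchanged, and beyond unitary invariance of $||\cdot||'_p$ the only analytic inputs are the submultiplicativity $||BAC^*||_p\le||A||_p\,||B||_{op}\,||C||_{op}$ and the estimate $|\mathrm{tr}\,X|\le\dim W\cdot||X||'_1\le\dim W\cdot||X||'_p$, both valid for $1\le p<\infty$. By the well-definedness lemma we use the presentation $(S_\mathcal{G},R_\mathcal{G})$. Unwinding the definitions, a homomorphism $\rho\colon\overpione\to U(W)$ is precisely a family of representations $\pi_v\coloneqq\rho|_{G_v}\colon G_v\to U(W)$ together with arbitrary unitaries $U_e\coloneqq\rho(s_e)$ for $e\in\vec{E}(\Gamma)$ (with $U_{\overline{e}}=U_e^{-1}$), and it is a $\delta$-almost $\pione$-representation exactly when $||U_e-I||'_p<\delta$ for $e\in\vec{E}(T)$ and $||U_e^{-1}\pi_{t(e)}(i_e(g))U_e-\pi_{o(e)}(i_{\overline{e}}(g))||'_p<\delta$ for all $e$ and $g\in G_e$; we must produce such a family in which the tree letters are $I$ and the conjugation relations hold exactly. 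Since $\overpione$ is a free product, setting $\rho(s_e)\coloneqq I$ for $e\in\vec{E}(T)$ and keeping all other values again defines a homomorphism; this changes $\rho$ by less than $\delta$ on $S_\mathcal{G}$ and produces an $O(\delta)$-almost $\pione$-representation, so after renaming $\delta$ we may assume $U_e=I$ for $e\in\vec{E}(T)$, in which case each tree edge contributes only $||\pi_{t(e)}(i_e(g))-\pi_{o(e)}(i_{\overline{e}}(g))||'_p<\delta$.

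The technical core is a \emph{matching lemma} for finite groups: if $H$ is finite, $\sigma_1,\sigma_2\colon H\to U(W)$ are representations and $U\in U(W)$ satisfies $||U\sigma_2(h)U^{*}-\sigma_1(h)||'_p<\delta$ for all $h\in H$, then after removing from each an $H$-invariant subspace of dimension at most $C_H\,\delta\,\dim W$ the two become honestly unitarily equivalent, by a unitary within $C_H\,\delta$ of $U$. Indeed, writing the $\chi$-isotypic projection of $\sigma_i$ as $\frac{\dim\chi}{|H|}\sum_{h}\overline{\chi(h)}\,\sigma_i(h)$, the trace estimate gives $\sum_\chi|\mathrm{mult}_\chi(\sigma_1)-\mathrm{mult}_\chi(\sigma_2)|\dim\chi\le|H|\,\delta\,\dim W$, so removing that many dimensions equalizes all multiplicities; and the averaged operator $\frac{1}{|H|}\sum_{h}\sigma_1(h)\,U\,\sigma_2(h)^{-1}$ is an exact $\sigma_2$-to-$\sigma_1$ intertwiner within $\delta$ of $U$, whose polar part (after deleting a further small invariant subspace on which it is degenerate) is the required unitary. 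All constants depend only on $H$, and since only unitary invariance and the submultiplicativity inequality are used, everything is uniform in $p\in[1,\infty)$; for finitely many representations the dimensions removed simply add.

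With these ingredients I would conclude by induction on the number of geometric edges of $\Gamma$. If $\Gamma$ has no edge then $\overpione=\pione$ and there is nothing to prove. Otherwise, if $\Gamma$ has a non-tree edge $e_0$, let $\mathcal{G}'$ be $\mathcal{G}$ with $\{e_0,\overline{e_0}\}$ removed (still connected, with the same spanning tree), so that $\pione$ is an HNN extension of $\pi_1(\mathcal{G}',T)$ over the finite group $G_{e_0}$; if $\Gamma=T$, remove a leaf $v_1$ of $T$ together with its edge $e_0$ to obtain $\mathcal{G}'$, so that $\pione=\pi_1(\mathcal{G}',T)\ast_{G_{e_0}}G_{v_1}$ is an amalgam over the finite group $G_{e_0}$ with finite factor $G_{v_1}$. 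In either case $\overline{\pi}_1(\mathcal{G}',T)$ is a free factor of $\overpione$ and $R_{\mathcal{G}'}\subset R_{\mathcal{G}}$, so the restriction of a $\delta$-almost $\pione$-representation is a $\delta$-almost $\pi_1(\mathcal{G}',T)$-representation; by the inductive hypothesis (stability of $\overline{\pi}_1(\mathcal{G}',T)\to\pi_1(\mathcal{G}',T)$) it is $\varepsilon'$-close on $S_{\mathcal{G}'}$ to $\phi^{*}(\tau)$ for some honest $\tau\colon\pi_1(\mathcal{G}',T)\to U(W)$, with $\varepsilon'$ at our disposal. Applying the matching lemma to the relevant pair of $G_{e_0}$-representations — the two that $\tau$ induces through the two edge inclusions in the HNN case, or the one induced by $\tau$ and the one induced by the brand-new $\pi_{v_1}$ (which may therefore be conjugated freely) in the amalgam case — we pass to a common invariant co-small subspace $V\subseteq W$ on which the relations in $R_{\mathcal{G}}\setminus R_{\mathcal{G}'}$ can be made exact, via a new stable letter $U_{e_0}'$ resp.\ a small correction of $\pi_{v_1}$, while the relations of $\mathcal{G}'$ remain exact. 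This yields an honest $\pione$-representation on $V$; extending it by $I^{\oplus\dim(W\ominus V)}$ on $W\ominus V$ (the trivial representation) gives $\rho'\colon\pione\to U(W)$. Since all corrections are $O_{\mathcal{G}}(\delta)$ and $\dim(W\ominus V)/\dim W=O_{\mathcal{G}}(\delta)$, and the mismatch on $W\ominus V$ is at most $2\bigl(\dim(W\ominus V)/\dim W\bigr)^{1/p}$, we get $d_{S_\mathcal{G}}(\rho,\phi^{*}(\rho'))=O_{\mathcal{G},p}(\delta^{1/p})$, which is $<\varepsilon$ once $\delta$ is chosen small enough.

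The main obstacle is the bookkeeping concealed in the inductive step: one must secure, \emph{on a single subspace $V$ of codimension $O_{\mathcal{G}}(\delta\,\dim W)$}, the invariance of $V$ under the representation coming from $\mathcal{G}'$, under the newly introduced vertex group, and under the new stable letter, together with the exact matching of the relevant $G_{e_0}$-multiplicities on $V$ — and one must arrange the successive reductions so that no previously established relation is spoiled. This is precisely the role of the careful ordering of vertices and edges relative to the spanning tree in \cite{lazarovich2021virtually}; granting it, the verification that the accumulated codimensions and $||\cdot||'_p$-errors stay $O_{\mathcal{G},p}(\delta^{1/p})$ is routine and, apart from the harmless exponent $1/p$ in the final estimate, does not depend on the value of $p\in[1,\infty)$.
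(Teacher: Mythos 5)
There is a genuine gap, and it sits exactly where you flag ``the main obstacle.'' Your inductive step asks for a co-small subspace $V\subseteq W$ that is simultaneously invariant under the already-constructed honest representation $\tau$ of $\pi_1(\mathcal{G}',T)$, under the new vertex group, and under the new stable letter, and on which the $G_{e_0}$-multiplicities match exactly. Such a $V$ need not exist: deleting a $G_{e_0}$-invariant subspace to equalize edge-group multiplicities (your matching lemma) produces a subspace that is in general not $\tau(\pi_1(\mathcal{G}',T))$-invariant, and if $\tau$ happens to be irreducible the only invariant subspaces are $0$ and $W$, so no co-small proper $V$ is available at all. The alternative of leaving $\tau$ alone and correcting only the new datum also fails: in the amalgam case you would need a $G_{v_1}$-representation whose restriction to $G_{e_0}$ equals a prescribed $G_{e_0}$-representation exactly, i.e.\ you would need $i_{e_0}^{\#}\bigl(\tau_{t(e_0)}^{\#}\bigr)$ to lie in the image of the positive cone $\Theta^+_{G_{v_1}}$ under $i_{\overline{e_0}}^{\#}$, which is a proper subcone in general; in the HNN case you would need the two restrictions of $\tau$ to $G_{e_0}$ to be isomorphic on the nose, which the inductive hypothesis does not provide. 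Fixing this forces you to go back and perturb $\tau$ on the already-processed part of the graph, which destroys the induction (or makes it circular). Appealing to ``the careful ordering of vertices and edges in \cite{lazarovich2021virtually}'' does not help, because that is not how Lazarovich--Levit (or this paper) argue.

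The missing idea is a \emph{global} correction of the multiplicity data rather than an edge-by-edge one. The paper records the full multiplicity vector $\lambda=\rho^{\#}\in\Theta_V^+$, shows via Theorem~\ref{main} that its defect satisfies $||\deriv\lambda||_E\prec\delta^p||\lambda||_V$ (Proposition~\ref{analogue1}), and then invokes the cone lemma of Lazarovich--Levit (Lemma~\ref{analogue2}) to produce, in one step, a vector $\lambda'\in\Theta_V^+\cap\ker\deriv$ with $||\lambda-\lambda'||_V\prec\delta^p||\lambda||_V$; only then is a representation with multiplicity vector $\lambda'$ built along the spanning tree and glued to $\rho$ using Gowers--Hatami (Lemma~\ref{analoguelemma}, Proposition~\ref{analogue3}), on all of $W$ and with error $\prec\delta$ rather than $\delta^{1/p}$. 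Your ``matching lemma'' is essentially Theorem~\ref{main} together with a character computation and is fine as far as it goes, but it is a local statement about one finite group and cannot substitute for the cone lemma, which is precisely what guarantees that all the edge constraints can be satisfied \emph{simultaneously} by a small perturbation.
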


\section{Stability of virtually free groups}

As we mentioned already, the strategy of proving that virtually free groups are stable with respect to the normalized $p$-Schatten norm is very similar to the strategy used in \cite{lazarovich2021virtually} to prove that virtually free groups are stable in permutations. Although the statements will look similar, the proofs will be vastly different.

\subsection{On  \textquote{close} representations of finite groups}

In \cite{gowers2017inverse} was proven that two representations of a finite group $G$ which are close to each other with respect to the normalized $p$-Schatten norm are almost isomorphic and there is an intertwining operator close to identity.
We will use a little bit more data hidden in the proof of this theorem and will change its statement accordingly (see Theorem 7.3 in \cite{gowers2017inverse}).

\begin{theorem} [Gowers, Hatami]
\label{main}
Let $\rho_1, \rho_2\colon G \to U(W)$ be two representations of a finite group $G$ such that $d_G(\rho_1,\rho_2)\le \delta$. 
Then for $i=1,2$ there exist $\rho_i$-invariant subspaces $V_i$ of dimension at least $\left(1-(2\delta)^p\right)\cdot \dim W$ 
and an operator $T$ such that $||T-I||'_p\le 3\delta$ and $\rho_2(x)\cdot T=T\cdot \rho_1(x)$ for every $x\in G$. 
Moreover, $T$ maps an orthonormal basis of $V_1$ to an orthonormal basis of $V_2$ and $V_1^{\perp}$ to $0$.
\end{theorem}

We will prove the following corollary which we will use further.

\begin{cor}
\label{maincor}
If moreover $\rho_1$ and $\rho_2$ are isomorphic, then there exists non-degenerate intertwining operator $T'\in U(W)$ such that $||T'-I||'_p\le 5\delta$.
\end{cor}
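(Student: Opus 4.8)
The plan is to upgrade the partial intertwiner $T$ from Theorem~\ref{main} to a genuine unitary, using the extra hypothesis that $\rho_1\cong\rho_2$. Recall $T$ maps an orthonormal basis of $V_1$ to an orthonormal basis of $V_2$ and kills $V_1^\perp$; so $T$ restricts to a unitary isomorphism $V_1\to V_2$ intertwining $\rho_1|_{V_1}$ and $\rho_2|_{V_2}$, and $\|T-I\|'_p\le 3\delta$. The dimension bound gives $\dim V_1^\perp=\dim V_2^\perp\le (2\delta)^p\dim W$. Since $\rho_1\cong\rho_2$, and $V_1,V_2$ are invariant subspaces on which the two representations agree (via $T$), the complementary pieces $\rho_1|_{V_1^\perp}$ and $\rho_2|_{V_2^\perp}$ must be isomorphic representations of $G$ as well: indeed, as $G$-representations $\rho_1|_{V_1}\oplus\rho_1|_{V_1^\perp}\cong\rho_1\cong\rho_2\cong\rho_2|_{V_2}\oplus\rho_2|_{V_2^\perp}$, and $\rho_1|_{V_1}\cong\rho_2|_{V_2}$, so by cancellation (Krull--Schmidt for representations of finite groups) we get $\rho_1|_{V_1^\perp}\cong\rho_2|_{V_2^\perp}$. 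Fix any unitary isomorphism $T_0\colon V_1^\perp\to V_2^\perp$ realizing this (an honest intertwiner between finite-dimensional unitary representations can always be taken unitary after polar decomposition).

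Next I would set $T'\coloneqq T\oplus T_0$ with respect to the orthogonal decompositions $W=V_1\oplus V_1^\perp=V_2\oplus V_2^\perp$; more precisely $T'$ agrees with $T$ on $V_1$ and with $T_0$ on $V_1^\perp$. Since $T|_{V_1}$ and $T_0$ are both unitary onto $V_2$ and $V_2^\perp$ respectively, $T'\in U(W)$. It intertwines $\rho_1$ and $\rho_2$ on each summand, hence on all of $W$. It remains to estimate $\|T'-I\|'_p$. Write $\|T'-I\|'_p{}^p\cdot\dim W = \mathrm{tr}(|T'-I|^p) = \mathrm{tr}(|T-I|^p\,P_{V_1}) + \mathrm{tr}(|T_0-I|^p\,P_{V_1^\perp})$ where $P_{V_1},P_{V_1^\perp}$ are the orthogonal projections (here I use that $T'-I$ respects the decomposition $W=V_1\oplus V_1^\perp$ only if $T$ and $T_0$ do — and $T$ kills $V_1^\perp$, sends $V_1$ into $V_2$, so $T-I$ does not respect it; see the remark below). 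The first term is at most $(3\delta)^p\dim W$. For the second term, both $T_0$ and $I$ restricted to $V_1^\perp$ are operators into spaces of dimension $\le(2\delta)^p\dim W$, so $\|T_0-I\|_p^p\le 2^p\dim V_1^\perp\le 2^p(2\delta)^p\dim W$, giving a contribution of $(4\delta)^p\dim W$. Hence $\|T'-I\|'_p{}^p\le (3\delta)^p+(4\delta)^p\le(5\delta)^p$, using $3^p+4^p\le 5^p$ for $p\ge 2$; for $1\le p\le 2$ one uses instead the triangle inequality $\|T'-I\|'_p\le\|T'-T\|'_p+\|T-I\|'_p$ where $T'-T$ is supported on $V_1^\perp\oplus$ (the part of $T$'s range issue), bounding $\|T'-T\|'_p$ by a constant times $\delta$ directly and reorganizing constants. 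I will choose whichever splitting keeps the final bound at $5\delta$.

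The main obstacle is precisely the bookkeeping in the last step: $T$ from Theorem~\ref{main} is not block-diagonal with respect to $W=V_1\oplus V_1^\perp$ viewed as source \emph{and} target simultaneously, because its source-orthogonal-complement $V_1^\perp$ is mapped to $0$ rather than to $V_2^\perp$. So the clean "Pythagorean" splitting of the $p$-norm needs care; the safe route is to estimate $\|T'-T\|'_p$ — note $T'-T$ vanishes on $V_1$ and equals $T_0$ on $V_1^\perp$, so its singular values are those of $T_0$ together with zeros, whence $\|T'-T\|'_p{}^p\dim W=\|T_0\|_p^p=\dim V_1^\perp\le(2\delta)^p\dim W$, i.e. $\|T'-T\|'_p\le 2\delta$ — and then conclude by the triangle inequality $\|T'-I\|'_p\le\|T'-T\|'_p+\|T-I\|'_p\le 2\delta+3\delta=5\delta$. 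This works uniformly in $p$ and avoids the case split entirely; I would present it this way. The only other point requiring a word is the invocation of Krull--Schmidt / cancellation for unitary representations of a finite group, which is standard (complete reducibility plus uniqueness of isotypic multiplicities).
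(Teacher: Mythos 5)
Your proposal is correct and, in the form you ultimately settle on (define $T'$ as $T$ plus a unitary intertwiner $V_1^\perp\to V_2^\perp$ obtained by cancellation, then bound $\|T'-T\|'_p\le 2\delta$ via the singular values and conclude by the triangle inequality), it coincides with the paper's own proof, where $T'-T$ is the operator called $S'$. The exploratory detour through the $\ell^p$-splitting is unnecessary, but your final argument is exactly the intended one.
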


\begin{proof}
$T$ establishes an isomorphism between representations $\rho_1|_{V_1}$ and $\rho_2|_{V_2}$.   
Since $\rho_1$ and $\rho_2$ are also isomorphic, $\rho_1|_{V_1^{\perp}}\colon G\to U(V_1^{\perp})$ is isomorphic to $\rho_2|_{V_2^{\perp}}\colon G \to U(V_2^{\perp})$.  
Let ${S\colon V_1^{\perp} \to V_2^{\perp}}$ be any intertwining operator between these representations which maps an orthonormal basis to an orthonormal basis. 
We can extend $S$ to $S'\colon W \to W$ by ${S'(V_1)=0}$. 
An operator $T':=T+S'$ maps an orthonormal basis of  $W$ to an orthonormal basis of $W$, hence $T'$ is a unitary intertwining operator. 
Singular values of $S'$ are either $0$ or $1$ and at most $\dim (V_1^{\perp})$ of them are equal to $1$. But $\dim (V_1^{\perp})\le (2\delta)^p \cdot \dim W$ and hence we get:  
$$||T'-I||'_p\le||T-I||'_p+||S'||'_p\le 3\delta + \left((2\delta)^p\right)^{\frac{1}{p}}=5\delta.$$   
\end{proof}

We will also need the following lemma that might be considered as a converse statement to Theorem~\ref{main}. 

\begin{lemma}
\label{converse}
Let $G$ be a finite group, $\rho\colon G \to U(W_1)$ and $\sigma_1, \sigma_2\colon G \to U(W_2)$ be some representations. 
If for some $0<\delta<1$ we have $\dim(W_2)\le \delta^p\cdot \dim(W_1\oplus W_2)$, then $d_G(\rho\oplus\sigma_1, \rho\oplus\sigma_2)\le 2\delta$. 
\end{lemma}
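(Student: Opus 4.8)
The plan is to estimate, for each fixed $g \in G$, the quantity $\|(\rho\oplus\sigma_1)(g) - (\rho\oplus\sigma_2)(g)\|'_p$ by exploiting the block-diagonal structure. Since the $\rho$-components agree, the difference operator $D_g := (\rho\oplus\sigma_1)(g) - (\rho\oplus\sigma_2)(g)$ is supported entirely on the $W_2$-block: it acts as $0$ on $W_1$ and as $\sigma_1(g) - \sigma_2(g)$ on $W_2$. Hence its nonzero singular values are exactly those of $\sigma_1(g) - \sigma_2(g)$, of which there are at most $\dim(W_2)$, and each is bounded by $\|\sigma_1(g)\|_{op} + \|\sigma_2(g)\|_{op} = 2$ because $\sigma_1(g), \sigma_2(g)$ are unitary.

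First I would write out the unnormalized $p$-Schatten norm using singular values: $\|D_g\|_p = \left(\sum_i \lambda_i^p\right)^{1/p} \le \left(\dim(W_2)\cdot 2^p\right)^{1/p} = 2\,\dim(W_2)^{1/p}$. Then I would divide by $\dim(W_1 \oplus W_2)^{1/p}$ to pass to the normalized norm:
\[
\|D_g\|'_p \;=\; \frac{\|D_g\|_p}{\dim(W_1\oplus W_2)^{1/p}} \;\le\; 2\left(\frac{\dim(W_2)}{\dim(W_1\oplus W_2)}\right)^{1/p} \;\le\; 2\left(\delta^p\right)^{1/p} \;=\; 2\delta,
\]
using the hypothesis $\dim(W_2) \le \delta^p\cdot\dim(W_1\oplus W_2)$ in the last inequality. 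Taking the maximum over $g \in G$ gives $d_G(\rho\oplus\sigma_1, \rho\oplus\sigma_2) \le 2\delta$, as desired.

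There is essentially no obstacle here — the lemma is a direct singular-value count — so the only things I would be careful about are: (i) confirming that the singular values of a block operator that vanishes on one summand are precisely the singular values of its restriction to the other summand (immediate from the SVD, or from $|D_g|^p$ being block-diagonal with one block zero), and (ii) keeping the normalization denominator consistent as $\dim(W_1 \oplus W_2)$ rather than $\dim(W_2)$, since that is where the hypothesis on the relative dimension enters. If one prefers to avoid singular values entirely, the same bound follows from $\|D_g\|'_p{}^p \cdot \dim(W_1\oplus W_2) = \operatorname{tr}(|D_g|^p) = \operatorname{tr}_{W_2}(|\sigma_1(g)-\sigma_2(g)|^p) \le \dim(W_2)\cdot 2^p$.
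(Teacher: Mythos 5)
Your proposal is correct and follows exactly the paper's own argument: the difference operator vanishes on the $W_1$-block, so it has at most $\dim(W_2)$ nonzero singular values, each bounded by $2$ by unitarity, and normalizing by $\dim(W_1\oplus W_2)$ together with the dimension hypothesis gives the bound $2\delta$. No differences worth noting.
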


\begin{proof}
For any $g\in G$, $(\rho\oplus\sigma_1)(g)-(\rho\oplus\sigma_2)(g)$ has at most $\dim(W_2)$ nonzero singular values and all of them are not greater than $2$. Hence,  
$$||(\rho\oplus\sigma_1)(g)-(\rho\oplus\sigma_2)(g)||'_p\le  \left( \frac{2^p\cdot \dim(W_2)}{\dim(W_1)+\dim(W_2)}  \right)^{\frac{1}{p}}\le 2\delta.$$
\end{proof}

\subsection{Cones and representations}

Since the representation theory of finite groups is nice, we can introduce cones of isomorphism classes of finite-dimensional representations, which are similar to the cones of isomorphism classes of actions on finite sets introduced in \cite{lazarovich2021virtually}.
\begin{definition}
Let $Repr(G)$ be the set of isomorphism classes of finite-dimensional unitary representations of a group $G$ and $Irr(G)$ be the set of isomorphism classes of irreducible finite-dimensional unitary representations of $G$. 
\end{definition}

\begin{definition}
Let $\Theta_G:=\bigoplus_{\pi \in Irr(G)} \mathbb{Z}\pi$ be the free $\mathbb{Z}$-module with basis $Irr(G)$ and let $\Theta^+_G \subset \Theta_G$ be its non-negative cone.
\end{definition}

Since every unitary representation is just a sum of irreducible representations, $Repr(G)$ can be identified with $\Theta_G^+$.
For a representation $\rho: G \to U(W)$ let us denote by $\rho^\#$ the corresponding element of $\Theta_G^+$.

We will consider a norm on $\Theta_G$, defined by
$$||\lambda||_G=\sum_{\pi \in Irr(G)}|\lambda_\pi|\cdot \dim \pi.$$
For a representation $\rho: G \to U(W)$, $||\rho^\#||_G=\dim(W)$.
For any $\lambda\in\Theta_G^+$ and any Hilbert space $W$ with $\dim(W)=||\lambda||_G$ there is a representation $\rho\colon G \to U(W)$ with $\rho^\#=\lambda$.

Given a homomorphism $i\colon H\to G$ and a representation $\rho\colon G \to U(W)$ one can consider the pullback representation $i^*(\rho)\colon H\to U(W)$. This gives a linear map $i^\# \colon \Theta_G \to \Theta_H$ which maps $\rho^\#$ to $(i^*(\rho))^\#$.

For a graph of groups 
$$\mathcal{G}=(\Gamma,\{G_v\}_{v\in V(\Gamma)},\{G_e\}_{e\in E(\Gamma)},\{i_e:G_e\to G_{t(e)} \}_{e \in E(\Gamma)})$$
we define $\mathbb{Z}$-modules 
$$\Theta_V=\bigoplus_{v\in V(\Gamma)} \Theta_{G_v} \text{  and   }\Theta_E=\bigoplus_{e\in E(\Gamma)} \Theta_{G_e} $$
and the corresponding positive cones 
$$\Theta_V^+=\bigoplus_{v\in V(\Gamma)} \Theta_{G_v}^+ \text{  and   }\Theta_E^+=\bigoplus_{e\in E(\Gamma)} \Theta_{G_e}^+ .$$
Let us note that, since $G_e=G_{\overline{e}}$, module $\Theta_{E}$ contains each $\Theta_{G_e}$ twice. 
It is not essential, but will allow us to simplify notations. 
It will be convenient to consider these $\mathbb{Z}$-modules with the norms
$$||\cdot||_V=\frac{1}{|V(\Gamma)|}\sum_{v\in V(\Gamma)}||\cdot||_{G_v}  \text{ and }  ||\cdot||_E=\frac{1}{|E(\Gamma)|}\sum_{e\in E(\Gamma)}||\cdot||_{G_e}.$$ 

For a representation $\rho\colon\overpione\to U(W)$ (or $\rho\colon\pione\to U(W)$) we can define $\rho^\#\in\Theta^+_V$ by
$$\left(\rho^\#\right)_v=\left(\rho|_{G_v}\right)^\#$$
for $v\in V$. 

Let $$\deriv|_{\Theta_{G_v}}=\sum_{e:t(e)=v}i^\#_e-\sum_{e:o(e)=v}i^\#_{\overline{e}}.$$

\begin{proposition}
\label{basic}
For any representation $\rho \colon \pione \to U(W)$ we have $\rho^\# \in  \Theta_V^+ \cap\ker \deriv$.  
For any $\lambda \in \Theta_V^+ \cap \ker \deriv$ there exists some representation $\rho  \colon \pione \to U(W)$ with $\rho^\#=\lambda$.
\end{proposition}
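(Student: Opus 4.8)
The statement splits into two implications, and I would treat them in turn. For the first, let $\rho\colon\pione\to U(W)$ be a representation. Each restriction $\rho|_{G_v}$ is a genuine finite-dimensional unitary representation of $G_v$, so $(\rho^\#)_v\in\Theta_{G_v}^+$ and hence $\rho^\#\in\Theta_V^+$. To see that $\rho^\#\in\ker\deriv$ I would unwind the definition of $\deriv$: for an oriented edge $e$ the $\Theta_{G_e}$-component of $\deriv(\rho^\#)$ is $i_e^\#\bigl((\rho^\#)_{t(e)}\bigr)-i_{\overline{e}}^\#\bigl((\rho^\#)_{o(e)}\bigr)$, that is, the difference of the isomorphism classes of the two $G_e$-representations $g\mapsto\rho(i_e(g))$ and $g\mapsto\rho(i_{\overline{e}}(g))$. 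But the relation $s_e^{-1}i_e(g)s_e\bigl(i_{\overline{e}}(g)\bigr)^{-1}\in R_{\mathcal{G}}$ gives $\rho(s_e)^{-1}\rho(i_e(g))\rho(s_e)=\rho(i_{\overline{e}}(g))$ for every $g\in G_e$, so $\rho(s_e)$ is a unitary intertwiner between these two representations; they are isomorphic, the $\Theta_{G_e}$-component vanishes, and since $e$ was arbitrary $\deriv(\rho^\#)=0$.

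For the converse I would build $\rho$ explicitly from $\lambda\in\Theta_V^+\cap\ker\deriv$. The key preliminary observation is that each $i_e^\#$ preserves the norm on the positive cone (pulling back along an injective homomorphism does not change the dimension of a representation), so $(\deriv\lambda)_e=0$ forces $\|\lambda_{t(e)}\|_{G_{t(e)}}=\|\lambda_{o(e)}\|_{G_{o(e)}}$ for every edge, and connectedness of $\Gamma$ makes all these numbers equal to a single integer $N$. Fix a Hilbert space $W$ with $\dim W=N$ and, for each vertex $v$, an honest representation of $G_v$ on $W$ whose isomorphism class is $\lambda_v$. It then remains to conjugate these representations (which does not affect their classes) and to choose unitaries $\rho(s_e)$ so that every relation of $R_{\mathcal{G}}$ holds. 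I would fix a root of the spanning tree $T$ and adjust the vertex representations one at a time, working outward along $T$: when a new vertex $w$ is reached through the tree edge $e$ joining it to an already-adjusted neighbour, the condition $(\deriv\lambda)_e=0$ says $i_e^*(\rho_{t(e)})\cong i_{\overline{e}}^*(\rho_{o(e)})$, so a unitary intertwiner exists, and conjugating the representation at $w$ by it makes the relation associated with $e$ hold on the nose with $\rho(s_e)=I$ (recall $e\in\vec{E}(T)$). Once all vertex representations are fixed, $(\deriv\lambda)_e=0$ still holds for every non-tree edge $e\in\vec{E}(\Gamma)$, so I can take a unitary intertwiner of $i_e^*(\rho_{t(e)})$ and $i_{\overline{e}}^*(\rho_{o(e)})$ and declare it to be $\rho(s_e)$, making the remaining relations hold. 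This data assembles into a homomorphism $\overpione\to U(W)$ that kills every element of $R_{\mathcal{G}}$, hence descends to a representation $\rho\colon\pione\to U(W)$, and by construction $(\rho^\#)_v=\lambda_v$ for all $v$, i.e. $\rho^\#=\lambda$.

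The one point that needs care is the bookkeeping in the tree induction: conjugating the representation at a vertex to fix the relation on its parent edge must not disturb a relation on another tree edge already incident to that vertex. I would handle this by processing $V(\Gamma)$ in breadth-first order from the root, so that at the moment a vertex is adjusted the only tree-edge relation required of it is the one pointing back toward the root; the relations on its other tree edges are imposed later, when the corresponding neighbours are adjusted, and since $T$ is a tree there is never a cycle of constraints to reconcile. The remaining ingredients — existence of the unitary intertwiners, invariance of $\rho^\#$ under conjugation, and the fact that an arbitrary choice of homomorphisms on the free factors $G_v$ together with arbitrary unitaries assigned to the letters $s_e$ extends to a homomorphism on $\overpione$ — are routine.
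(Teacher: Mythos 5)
Your proof is correct and follows essentially the same route as the paper: the first part via the observation that $\rho(s_e)$ intertwines $i_e^*(\rho|_{G_{t(e)}})$ and $i_{\overline{e}}^*(\rho|_{G_{o(e)}})$, and the converse via an induction along the spanning tree that conjugates the vertex representations into compatibility and then assigns unitary intertwiners to the non-tree edges. The extra bookkeeping remark about processing vertices outward from a root is exactly what the paper's tree induction implicitly does, so there is nothing to add.
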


\begin{proof} 
For any representation $\rho \colon \pione \to U(W)$ and any $e\in E(\Gamma)$ representations $i_e^*(\rho|_{G_{t(e)}})$ and 
$i_{\overline{e}}^*(\rho|_{G_{o(e)}})$ of $G_e$ are conjugated by $\rho(s_e)$, hence 
$$\left(\deriv (\rho^\#)\right)_e=i_e^\#(\rho^\#_{t(e)})-i^\#_{\overline{e}}(\rho^\#_{o(e)}) =
(i_e^*(\rho|_{G_{t(e)}}))^\# - (i_{\overline{e}}^*(\rho|_{G_{o(e)}}))^\#=0$$ and $\rho^\# \in \ker \deriv \cap \Theta^+_V$.

Proof of the second part has the same flavor as the proof of Remark~\ref{indep}. 
Let us note that for any $\lambda \in \Theta_V^+ \cap \ker \deriv$,  $||\lambda||_V = ||\lambda_v||_{G_v}$ for any $v\in V$, since $\Gamma$ is connected. 
Let us fix a Hilbert space $W$ with $\dim(W)=||\lambda||$. 
We will construct a representation $\rho\colon \overpione \to U(W)$ and show that relations $R_\mathcal{G}$ hold. 
At first we will construct representations $\rho_v$ of $G_v$ inductively using the spanning tree $T$.\\
\textit{Basis.} Let us fix some $v\in V(\Gamma)$ and some representation $\rho_v$ of $G_v$ with $\rho_v^\#=\lambda_v$. \\
\textit{Induction step.} Assume that for some $e\in E(T)$ we already have $\rho_{t(e)}$ with $\rho_{t(e)}^\#=\lambda_{t(e)}$. 
Let us fix a representation $\rho'\colon G_{o(e)}\to U(W)$ with $(\rho')^\#=\lambda_{o(e)}$. 
Since 
$$(i_e^*(\rho_{t(e)}))^\#=i_e^\#(\lambda_{t(e)})=i_{\overline{e}}^\#(\lambda_{o(e)})= (i_{\overline{e}}^*(\rho'))^\#,$$
these representations are isomorphic and there is a unitary intertwining operator $S$ such that 
$i_e^*(\rho_{t(e)})(g_e) \cdot S = S \cdot i_{\overline{e}}^*(\rho')(g_e)$ for all $g_e\in G_e$. 
Now we can define $\rho_{o(e)}\coloneqq ad(S^{-1})(\rho')$, where $(ad(L)(\rho))(g)=L^{-1}\cdot \rho(g) \cdot L$. 
Let us note that $i^*_{e}(\rho_{t(e)})=i^*_{\overline{e}}(\rho_{o(e)})$. \\
Now we will construct our representation on elements $s_e$, $e\in \vec{E}(\Gamma)$. 
For $e \in \vec{E}(T)$ we define $\rho_e = I$. 
For $e \in \vec{E}(\Gamma)\setminus \vec{E}(T)$ similar to the induction step we acquire unitary operators $\rho_e$ such that 
$i_e^*(\rho_{t(e)})(g_e)  \cdot \rho_e=\rho_e\cdot  i_{\overline{e}}^*(\rho_{o(e)})(g_e)$ for all $g_e\in G_e$. 
Collections of $\rho_v$ and $\rho_e$ define the representation  $\rho\colon \overpione \to U(W)$ with $\rho(s_e)=\rho_e$. The relations $R_\mathcal{G}$ hold by the construction. This finishes the proof. 
\end{proof}

We will write $A\prec_{X} B$ if $A<C\cdot B$ for some constant $C$, which depends only on $X$. \\
Let us state a general fact about cones (see Lemma 5.3 in \cite{lazarovich2021virtually}) only for $\Theta^+$.
\begin{lemma} [Lazarovich, Levitt]
\label{analogue2}
For any $\lambda \in \Theta^+_V$ there exists $\lambda'' \in \Theta_V^+ \cap \ker \deriv$ satisfying 
${||\lambda-\lambda''||_V\prec_{\mathcal{G}} ||\deriv\lambda||_E}$ and $||\lambda''||_V\le||\lambda||_V$.
\end{lemma}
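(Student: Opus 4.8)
The plan is to mimic the proof of Lemma 5.3 in \cite{lazarovich2021virtually}, working entirely inside the finitely generated free $\mathbb{Z}$-module $\Theta_V$ equipped with the piecewise $\ell^1$-type norm $||\cdot||_V$, and with the boundary map $\deriv\colon\Theta_V\to\Theta_E$. The key structural fact is that $\ker\deriv\cap\Theta_V^+$ is exactly the cone of admissible dimension vectors (this is essentially the content of Proposition~\ref{basic}): the condition $\deriv\lambda=0$ says that along every edge $e$ the ``pushed-forward'' dimension vectors from $G_{t(e)}$ and $G_{o(e)}$ agree in $\Theta_{G_e}$. So what has to be shown is a quantitative statement: a vector $\lambda$ that is only \emph{approximately} in the kernel (with defect measured by $||\deriv\lambda||_E$) can be corrected to an exact kernel element at linear cost, without increasing the norm.

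First I would reduce to the case where $\Gamma$ is a tree, in fact the spanning tree $T$: the extra edges $e\in\vec E(\Gamma)\setminus\vec E(T)$ contribute to $\deriv$ on $\Theta_V$ only through the identity maps $i_e,i_{\overline e}$ on already-fixed vertex groups, and fixing the tree constraints automatically controls the tree-edge coordinates of $\deriv\lambda$; the loop coordinates are handled afterwards at no extra asymptotic cost since $|E(\Gamma)\setminus E(T)|$ is bounded in terms of $\mathcal{G}$. Then I would process the tree $T$ from the leaves inward. At a leaf $v$ with unique edge $e$ pointing to its parent, the defect $(\deriv\lambda)_e$ measures the failure of $i_e^\#(\lambda_v)=i_{\overline e}^\#(\lambda_{\mathrm{parent}})$ in $\Theta_{G_e}$. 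I would modify $\lambda_v$ by the smallest correction $\mu_v\in\Theta_{G_v}$ making this hold; because $i_e$ is injective and $G_v,G_e$ are finite, the map $i_e^\#$ has a fixed finite ``distortion'' depending only on $\mathcal{G}$, so $||\mu_v||_{G_v}\prec_{\mathcal{G}}||(\deriv\lambda)_e||_{G_e}$, and moreover one can arrange $||\lambda_v+\mu_v||_{G_v}\le||\lambda_v||_{G_v}$ by removing rather than adding multiplicity wherever possible (if $\lambda$ is too small at a leaf one instead adjusts the parent — this is the bookkeeping that makes the norm non-increasing claim work). Iterating this up the tree, each step changes $\deriv\lambda$ only on the edges incident to the current vertex, and the accumulated correction $\lambda''-\lambda$ is a sum of boundedly many terms each $\prec_{\mathcal{G}}||\deriv\lambda||_E$, giving $||\lambda-\lambda''||_V\prec_{\mathcal{G}}||\deriv\lambda||_E$ and $||\lambda''||_V\le||\lambda||_V$.

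I expect the main obstacle to be exactly the simultaneous control of both conclusions: getting $\lambda''$ into $\ker\deriv\cap\Theta_V^+$ at linear cost is straightforward linear algebra over $\mathbb{Q}$ (the kernel is a rational subspace and $\deriv$ has bounded denominators), but keeping $\lambda''$ \emph{non-negative} and with $||\lambda''||_V\le||\lambda||_V$ simultaneously requires the careful leaf-to-root scheme above, where at each vertex one decides whether to shrink the current coordinate or enlarge the parent's, never letting a coordinate go negative. One must also check that the distortion constants for the pullback maps $i_e^\#$ — i.e.\ how large $||\nu||_{G_e}$ can be relative to $||i_e^\#(\lambda)-\nu||$ when $\nu$ lies in the image lattice — depend only on the finite data $\{G_v\},\{G_e\},\{i_e\}$ and not on $\lambda$; this is where finiteness of the vertex and edge groups is used. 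Since this is precisely the content of Lemma 5.3 of \cite{lazarovich2021virtually} transported from the ``$\Theta$ of finite sets'' setting to the ``$\Theta$ of representations'' setting — and both are finite-rank free $\mathbb{Z}$-modules with $\ell^1$-type norms and integral boundary maps — the argument goes through verbatim, so I would simply invoke that lemma after setting up the dictionary between the two cones.
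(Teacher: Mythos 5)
The paper gives no proof of this lemma at all: it is presented as ``a general fact about cones'' with a bare citation to Lemma 5.3 of \cite{lazarovich2021virtually}, which is precisely the fallback you land on in your final paragraph, and since that lemma is a general statement about integral linear maps between free $\mathbb{Z}$-modules with $\ell^1$-type norms and positive cones, invoking it for $\deriv\colon\Theta_V\to\Theta_E$ is exactly what the paper does. Your intermediate leaf-to-root sketch is therefore unnecessary (and as written it glosses over the facts that adjusting a parent vertex perturbs the already-fixed edges to its other children and that the images of $i_e^\#$ and $i_{\overline e}^\#$ are in general different subcones of $\Theta_{G_e}$), but your conclusion coincides with the paper's approach.
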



\subsection{From representations to cones and back to representations}

\begin{proposition}
\label{analogue1}
Let $\rho: \overpione\to U(W)$ be a unitary representation. If $\rho$ is a $\delta$-almost $\pione$-representation for some $\delta<1$, then
$$ || \deriv(\rho^\#)||_E \prec\delta^p \cdot|| \rho^\#||_V.$$
\end{proposition}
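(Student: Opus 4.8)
The plan is to estimate $\|\mathbf{d}_{\mathcal{G}}(\rho^\#)\|_E$ edge by edge. Fix an edge $e \in E(\Gamma)$. By definition, $\left(\mathbf{d}_{\mathcal{G}}(\rho^\#)\right)_e = i_e^\#\bigl((\rho|_{G_{t(e)}})^\#\bigr) - i_{\overline{e}}^\#\bigl((\rho|_{G_{o(e)}})^\#\bigr)$, which is exactly $(\sigma_1)^\# - (\sigma_2)^\#$ where $\sigma_1 = i_e^*(\rho|_{G_{t(e)}})$ and $\sigma_2 = i_{\overline{e}}^*(\rho|_{G_{o(e)}})$ are two representations of $G_e$ on $W$. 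So the whole proposition amounts to: if two representations $\sigma_1, \sigma_2$ of a finite group $G_e$ on $W$ are ``conjugate up to $\delta$'' — in the sense coming from the relation $s_e^{-1} i_e(g_e) s_e = i_{\overline{e}}(g_e)$ being $\delta$-almost satisfied — then $\|(\sigma_1)^\# - (\sigma_2)^\#\|_{G_e}$ is controlled by $\delta^p \dim W$.

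The first step is to make precise what ``$\delta$-almost'' gives us here. Since $\rho$ is a $\delta$-almost $\pione$-representation, for every $g_e$ in the (finite) set $G_e$ we have $\|\rho(s_e^{-1} i_e(g_e) s_e (i_{\overline{e}}(g_e))^{-1}) - I\|'_p < \delta$; because $\rho(s_e)$ is unitary this rearranges to $\|\sigma_1(g_e)\, \rho(s_e) - \rho(s_e)\, \sigma_2(g_e)\|'_p < \delta$ for all $g_e \in G_e$. Averaging $\rho(s_e)$ against the group (i.e.\ setting $A = \frac{1}{|G_e|}\sum_{g_e} \sigma_1(g_e)^{-1}\, \rho(s_e)\, \sigma_2(g_e)$) produces a genuine intertwiner $A\sigma_2 = \sigma_1 A$ with $\|A - \rho(s_e)\|'_p < \delta$, hence $A$ is within $\delta$ of a unitary in $p$-Schatten norm. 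The second step extracts from this a bound on how far the isotypic decompositions of $\sigma_1$ and $\sigma_2$ can differ: writing $\sigma_j$ in terms of multiplicities $m_\pi^{(j)}$ over $\pi \in Irr(G_e)$, the existence of such an $A$ forces $\sum_\pi |m_\pi^{(1)} - m_\pi^{(2)}| \cdot (\dim \pi)^2$ to be small. Concretely, $A$ is block-diagonal along $Irr(G_e)$, and on the $\pi$-block its singular values (with multiplicity $\dim\pi$) reflect a partial isometry between the multiplicity spaces of dimensions $m_\pi^{(1)}$ and $m_\pi^{(2)}$; so at least $|m_\pi^{(1)} - m_\pi^{(2)}| \cdot \dim \pi$ singular values of $A - (\text{unitary})$ are bounded below by a constant, and $\|A - \text{unitary}\|'_p \le 2\delta$ then yields $\sum_\pi |m_\pi^{(1)} - m_\pi^{(2)}| \dim\pi \prec_{G_e} \delta^p \dim W$. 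Summing over the finitely many edges and using the definitions of $\|\cdot\|_E$ and $\|\cdot\|_V$ (and that $\|\rho^\#\|_V = \|(\rho|_{G_v})^\#\|_{G_v} = \dim W$ for each $v$) gives the claim, with the implied constant depending only on $\mathcal{G}$.

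A cleaner route for the second step — probably the one the authors intend — is to invoke Theorem~\ref{main} directly: the near-intertwiner above shows $\sigma_1$ and $\text{ad}(\rho(s_e))(\sigma_2)$ are $O(\delta)$-close in $d_{G_e}$, so the theorem yields $\sigma_j$-invariant subspaces $V_j$ of codimension $\le (2\delta)^p \dim W \cdot (\text{const})$ with $\sigma_1|_{V_1} \cong \sigma_2|_{V_2}$; on these subspaces the $\#$-classes agree, and the discrepancy $(\sigma_1)^\# - (\sigma_2)^\#$ is supported on complements of total dimension $\prec \delta^p \dim W$, whence $\|(\sigma_1)^\# - (\sigma_2)^\#\|_{G_e} \prec_{\mathcal{G}} \delta^p \dim W$.

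The main obstacle is the passage from ``$\rho(s_e)$ approximately intertwines $\sigma_1$ and $\sigma_2$ in $p$-Schatten norm'' to ``$\sigma_1, \sigma_2$ are close in the operator-by-operator distance $d_{G_e}$'' required to feed Theorem~\ref{main}. The averaging trick produces an intertwiner $A$ that is $L^p$-close to $\rho(s_e)$, but $A$ need not be unitary, and one must control $\|A - U\|'_p$ for a unitary $U$ in its polar decomposition and then check that conjugating $\sigma_2$ by $U$ moves each $\sigma_2(g)$ by $O(\delta)$ in $\|\cdot\|'_p$ — this is where the unitary-invariance and submultiplicativity inequalities ($\|BAC^*\|_p \le \|A\|_p \|B\|_{op}\|C\|_{op}$) quoted after Corollary~1 do the work, and where one must be careful that the constants absorbed only depend on $|G_e|$ and hence ultimately on $\mathcal{G}$, not on $W$ or $n$.
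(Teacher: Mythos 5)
Your ``cleaner route'' is precisely the paper's proof, and it already closes: by unitary invariance of $\|\cdot\|'_p$, the almost-relation $\|\rho(s_e^{-1}i_e(g)s_e(i_{\overline e}(g))^{-1})-I\|'_p<\delta$ says \emph{verbatim} that the two genuine representations $ad(\rho(s_e))(i_e^*(\rho|_{G_{t(e)}}))$ and $i_{\overline e}^*(\rho|_{G_{o(e)}})$ of $G_e$ satisfy $d_{G_e}<\delta$; Theorem~\ref{main} then yields isomorphic subrepresentations of codimension at most $(2\delta)^p\dim W$, and since conjugation by the unitary $\rho(s_e)$ does not change the class in $\Theta^+_{G_e}$, the discrepancy $(\deriv\rho^\#)_e$ is supported on the complements, giving $\|(\deriv\rho^\#)_e\|_{G_e}\prec\delta^p\dim W$, and one sums over the finitely many edges. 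What your final paragraph calls ``the main obstacle'' is not an obstacle: you do not need $\rho(s_e)$ to be (or to be near) an exact intertwiner, you only need the representation obtained by conjugating with the \emph{unitary} $\rho(s_e)$ to be pointwise close to the other one, and that is a one-line consequence of unitary invariance (multiply $\sigma_1(g)\rho(s_e)-\rho(s_e)\sigma_2(g)$ on the right by $\rho(s_e)^*$). Consequently the entire first route --- the averaging operator $A$, its polar decomposition, the block-by-block singular-value count (where, incidentally, the weight should be $\dim\pi$, not $(\dim\pi)^2$, to match $\|\cdot\|_{G_e}$) --- is machinery you can delete. Keep the second paragraph's reduction and the ``cleaner route,'' drop the rest.
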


\begin{proof}
Let us fix an oriented edge $e\in \vec{E}(\Gamma)$. 
By the definition of a $\delta$-almost $\pione$-representation and by the unitary invariance of the normalized $p$-Schatten norm we have  
$$||\rho(s_e^{-1}i_e(g)s_e)-\rho(i_{\overline{e}}(g)) ||'_p<\delta$$ 
for any  group element $g\in G_e$ . For convenience we will write $\rho_v$ instead of $\rho|_{G_v}$.
We can apply Theorem~\ref{main} to the representations ${ad(\rho(s_e))(i_e^*(\rho_{t(e)}))}$ and $i_{\overline{e}}^*(\rho_{o(e)})$. 
It gives us isomorphic subrepresentations of these representations of dimension at least $\left(1-(2\delta)^p\right)\cdot\dim W$. 
Since $ad(\rho(s_e)) (i_e^*(\rho_{t(e)}))$ and $i_{e}^*(\rho_{t(e)})$ are conjugated,
$\left(ad(\rho(s_e))(i_e^*(\rho_{t(e)}))\right)^\#=(i_e^*(\rho_{t(e)}))^\#$
and 
$$||\deriv(\rho^\#)_e||_{G_e}=||i_e^\#(\rho_{t(e)}^\#)-i_{\overline{e}}^\#(\rho_{o(e)}^\#)||_{G_e}=
||(i_e^*(\rho_{t(e)}))^\#-( i_{\overline{e}}^*(\rho_{o(e)}))^\#||_{G_e}= $$
$$= ||\left(ad(\rho(s_e))(i_e^*(\rho_{t(e)}))\right)^\#-( i_{\overline{e}}^*(\rho_{o(e)}))^\#||_{G_e} \le (2\delta)^p\cdot \dim W \prec \delta^p\cdot ||\rho^\#||_V.$$ 
This finishes the proof. 
\end{proof}

\begin{lemma}
\label{analoguelemma}
Let $i\colon H\to G$ be a homomorphism of finite groups. 
Let $\tau: H\to U(W)$ and $\rho: G \to U(W)$ be a pair of representations. 
Denote $\lambda=\rho^\#$. 
If $\lambda'\in\Theta_G^+$ and $\delta>0$ are such that 
\begin{enumerate}
\item $d_H(i^*(\rho);\tau)\le \delta$,
\item $i^\#(\lambda')=\tau^\#$,
\item $||\lambda-\lambda'||_G\le \delta^p \cdot ||\lambda||_G$,
\end{enumerate}
then there exists a unitary representation $\rho'\colon G\to U(W)$ satisfying
\begin{enumerate}
\item $i^*(\rho')=\tau$,
\item $(\rho')^\#= \lambda'$,
\item $d_G(\rho', \rho) \prec\delta$.
\end{enumerate}
\end{lemma}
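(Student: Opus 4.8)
The plan is to compare $\tau$ with $i^*(\rho)$ using Theorem~\ref{main}, then surgically modify $\rho$ on a small-dimensional invariant subspace so that the restriction to $H$ becomes exactly $\tau$ while the new isomorphism type is exactly $\lambda'$. First I would apply Theorem~\ref{main} to the pair of $H$-representations $i^*(\rho)$ and $\tau$ on $W$: since $d_H(i^*(\rho),\tau)\le\delta$, we get $i^*(\rho)$-invariant subspace $U_1\subset W$ and $\tau$-invariant subspace $U_2\subset W$, each of dimension at least $(1-(2\delta)^p)\dim W$, together with an intertwiner $T$, $\|T-I\|'_p\le 3\delta$, mapping an orthonormal basis of $U_1$ to one of $U_2$ and killing $U_1^\perp$. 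The key point is that $\dim U_1^\perp\le(2\delta)^p\dim W$ is small.

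Next I would read off the ``error'' representations. Let $\alpha_1=i^*(\rho)|_{U_1^\perp}$ and $\alpha_2=\tau|_{U_2^\perp}$; these are $H$-representations on spaces of dimension $\le(2\delta)^p\dim W$. Now I want to build $\rho'$ so that $i^*(\rho')=\tau$. The recipe: keep $\rho$ essentially unchanged on the big invariant piece, and replace its behaviour on the small piece. Concretely, decompose (up to the isomorphism $T$) $\rho\cong\rho_0\oplus\beta$ where $\rho_0$ is a $G$-subrepresentation whose restriction to $H$ contains $\tau|_{U_2}$ (I need to be a little careful: $U_1$ is only $i^*(\rho)$-invariant, not necessarily $\rho$-invariant, so I would instead choose a $\rho$-invariant subspace $W_0\subset W$ with $\dim W_0^\perp\prec_{\mathcal G}\delta^p\dim W$ on which things match — this is where condition~(3), $\|\lambda-\lambda'\|_G\le\delta^p\|\lambda\|_G$, and the norm $\|\cdot\|_G$ get used, since it bounds the total dimension of irreducibles where $\lambda$ and $\lambda'$ disagree). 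On $W_0$ keep $\rho$; on $W_0^\perp$ put the $G$-representation with $\#$-invariant equal to $\lambda'-(\rho|_{W_0})^\#$, which is realizable by the discussion after the definition of $\|\cdot\|_G$ provided the dimensions match — and they do, because $\|\lambda'\|_G=\|\lambda\|_G=\dim W$. Call the result $\tilde\rho'$; then $(\tilde\rho')^\#=\lambda'$ and $d_G(\tilde\rho',\rho)\le2\delta'$ with $\delta'\prec_{\mathcal G}\delta$ by Lemma~\ref{converse}.

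The remaining task is to correct $i^*(\tilde\rho')$ to be \emph{exactly} $\tau$. By construction $i^\#((\tilde\rho')^\#)=i^\#(\lambda')=\tau^\#$, so $i^*(\tilde\rho')$ and $\tau$ are isomorphic $H$-representations; moreover both are $\delta''$-close in $d_H$ for $\delta''\prec_{\mathcal G}\delta$ (combining $d_H(i^*(\rho),\tau)\le\delta$ with $d_G(\tilde\rho',\rho)\prec\delta$). Now apply Corollary~\ref{maincor} to the isomorphic $H$-representations $i^*(\tilde\rho')$ and $\tau$: there is a unitary intertwiner $T'\in U(W)$ with $\|T'-I\|'_p\prec_{\mathcal G}\delta$ and $\tau(h)T'=T'\,i^*(\tilde\rho')(h)$ for all $h\in H$. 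Set $\rho'=\operatorname{ad}(T'^{-1})(\tilde\rho')$, i.e. $\rho'(g)=T'\tilde\rho'(g)T'^{-1}$. Then $\rho'$ is a $G$-representation with $i^*(\rho')=\tau$ exactly, $(\rho')^\#=(\tilde\rho')^\#=\lambda'$ (conjugation preserves isomorphism class), and
$$d_G(\rho',\rho)\le d_G(\rho',\tilde\rho')+d_G(\tilde\rho',\rho)\le 2\|T'-I\|'_p+d_G(\tilde\rho',\rho)\prec_{\mathcal G}\delta,$$
using $\|T'XT'^{-1}-X\|'_p\le\|(T'-I)XT'^{-1}\|'_p+\|X(T'^{-1}-I)\|'_p\le2\|T'-I\|'_p$ since $X$ is unitary and the norm is unitarily invariant. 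This gives all three conclusions.

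The main obstacle I anticipate is the bookkeeping in the middle step: $U_1$ from Theorem~\ref{main} is only invariant for $i^*(\rho)$, whereas to talk about $\rho^\#$ and to realize $\lambda'$ as a genuine $G$-representation I need a $\rho$-invariant (i.e. $G$-invariant) complement of small dimension, and I must make sure that replacing $\rho$ there can be done so that the new $H$-restriction is isomorphic to $\tau$ — this is exactly what hypothesis~(2), $i^\#(\lambda')=\tau^\#$, guarantees, but matching it up with the dimension count from~(3) and keeping every error term $\prec_{\mathcal G}\delta$ rather than merely $\prec_{\mathcal G}\delta^{1/p}$ or similar requires care with which norm ($\|\cdot\|'_p$ versus $\|\cdot\|_G$) is used where. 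Everything else is assembling Theorem~\ref{main}, Corollary~\ref{maincor}, and Lemma~\ref{converse} in sequence.
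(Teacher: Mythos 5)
Your proposal is correct and follows essentially the same route as the paper: replace $\rho$ on a small $G$-invariant complement of the ``common part'' of $\lambda$ and $\lambda'$ to realize $\lambda'$ (with Lemma~\ref{converse} controlling the distance), then use hypothesis (2) to see the $H$-restrictions are isomorphic and conjugate by the unitary intertwiner from Corollary~\ref{maincor}. The opening application of Theorem~\ref{main} to $i^*(\rho)$ and $\tau$ is a detour you yourself discard (its subspaces are only $H$-invariant) and is absent from the paper's proof, which goes directly to the componentwise minimum $\lambda_1=\min(\lambda,\lambda')$.
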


\begin{proof}
We will denote by $\lambda_1\in \Theta_G^+$ the common part of $\lambda$ and $\lambda'$, that is the vector $\lambda_1$ with 
$(\lambda_1)_\pi=\min(\lambda_\pi,\lambda_\pi')$ for $\pi\in Irr(G)$.  Then
$$||\lambda-\lambda'||_G=2||\lambda-\lambda_1||_G=2||\lambda'-\lambda_1||_G.$$ 
Let us take a $\rho(G)$-invariant subspace $V_1$ such that $(\rho|_{V_1})^\#=\lambda_1$. 
Let us define a new representation $\sigma\colon G\to U(V_1^\perp)$ such that $\sigma^\#=\lambda'-\lambda_1\in \Theta^+_G$.
Now we can construct $\rho_1\colon G\to U(W)$ as a sum $\rho_1\coloneqq\rho|_{V_1}\oplus\sigma$.  
Let us note that $\rho_1^\#=\lambda'$ and, since $2\dim(V_1^\perp)\le\delta^p\cdot\dim(W)$, by Lemma~\ref{converse}
$$d_G(\rho,\rho_1)\le 2\delta.$$
Since $\rho_1|_H$ and $\tau$ are two isomorphic representations of $H$ and 
$$d_H(i^*(\rho_1),\tau)\le d_H(i^*(\rho_1), i^*(\rho))+d_H(i^*(\rho),\tau)\le d_G(\rho_1, \rho)+\delta \le3\delta,$$ 
 we can apply Corollary~\ref{maincor}. 
We get an operator $T\in U(W)$ such that $||T-I||_p'\prec \delta$ and ${ad(T)(\rho_1|_H)=\tau}$.  
Now we can define $\rho'$ by $\rho':=ad(T)(\rho_1)$. We have $i^*(\rho')=\tau$, $(\rho')^\#=\lambda'$ and
$$d_G(\rho_1,\rho')\le 2||T-I||_p'\prec\delta,$$
hence
 $$d_G(\rho',\rho)\le d_G(\rho',\rho_1)+d_G(\rho_1,\rho)\prec \delta.$$ 
This finishes the proof.
\end{proof}

To prove the next proposition we will modify the proof of Proposition~\ref{basic}.

\begin{proposition}
\label{analogue3}
Let $\rho\colon\overline{\pi}_1(\mathcal{G},T) \to U(W)$ be a $\delta$-almost $\pi_1(\mathcal{G},T)$-representation with $\lambda=\rho^\#$. Let $\lambda'\in \Theta_V^+$ be any vector with $||\lambda'||_V=||\lambda||_V$. If 
\begin{enumerate}
\item $\lambda' \in \ker \deriv$,
\item $||\lambda-\lambda'||_V\le \delta^p\cdot ||\lambda||_V$,
\end{enumerate}
then there is a representation $\rho'\colon \pi_1(\mathcal{G}, T)\to U(W)$ satisfying 
\begin{enumerate}
\item $(\rho')^\#=\lambda'$,
\item $d_{S_{\mathcal{G}}}(\rho, \rho')\prec_{\mathcal{G}}\delta$.
\end{enumerate}
\end{proposition}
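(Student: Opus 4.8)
\emph{Proof plan.} The plan is to redo the construction in the proof of Proposition~\ref{basic}, but starting from the given almost-representation $\rho$ and using Lemma~\ref{analoguelemma} to keep every newly built block close to the corresponding restriction of $\rho$. We may assume $\delta$ is smaller than a constant depending only on $\mathcal{G}$, since otherwise $d_{S_{\mathcal{G}}}(\rho,\rho')\le 2\prec_{\mathcal{G}}\delta$ for any representation $\rho'$ of $\pione$ with $(\rho')^\#=\lambda'$, and one such exists by Proposition~\ref{basic}. Write $\rho_v:=\rho|_{G_v}$, $\lambda_v=\rho_v^\#$, and let $\lambda'_v$ be the $v$-component of $\lambda'$. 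Since $\rho_v$ acts on all of $W$ we have $||\lambda_v||_{G_v}=\dim W=||\lambda||_V$ for every $v\in V(\Gamma)$, and since $\lambda'\in\ker\deriv$ and $\Gamma$ is connected also $||\lambda'_v||_{G_v}=||\lambda'||_V=\dim W$; combined with hypothesis~(2) this yields $||\lambda_v-\lambda'_v||_{G_v}\le|V(\Gamma)|\cdot\delta^p\cdot\dim W$ for each $v$.

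First I would construct the vertex blocks $\rho'_v\colon G_v\to U(W)$ by induction along the spanning tree $T$. For the root $v_0$ this is Lemma~\ref{analoguelemma} with trivial $H$ (equivalently: split off the maximal common subrepresentation of $\rho_{v_0}$ and some representation with $\#$-invariant $\lambda'_{v_0}$ and invoke Lemma~\ref{converse}), giving $(\rho'_{v_0})^\#=\lambda'_{v_0}$ and $d_{G_{v_0}}(\rho_{v_0},\rho'_{v_0})\prec_{\mathcal{G}}\delta$. For the inductive step along a tree edge $e$ with $t(e)$ closer to the root, I would feed Lemma~\ref{analoguelemma} the homomorphism $i_{\overline{e}}\colon G_e\to G_{o(e)}$, the representation $\rho_{o(e)}$, the target $\tau:=i_e^*(\rho'_{t(e)})$ and the vector $\lambda'_{o(e)}$. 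Hypothesis~(1) of that lemma holds because $\rho$ is a $\delta$-almost representation and $\rho(s_e)$ is $\delta$-close to $I$ (as $e\in\vec{E}(T)$), which forces $d_{G_e}(i_{\overline{e}}^*(\rho_{o(e)}),i_e^*(\rho_{t(e)}))\prec\delta$, and then $i_e^*(\rho_{t(e)})$ is $\prec_{\mathcal{G}}\delta$-close to $\tau$ by the inductive bound on $d_{G_{t(e)}}(\rho_{t(e)},\rho'_{t(e)})$; hypothesis~(2) holds because $(\deriv\lambda')_e=0$ gives $i_e^\#(\lambda'_{t(e)})=i_{\overline{e}}^\#(\lambda'_{o(e)})$, hence $\tau^\#=i_{\overline{e}}^\#(\lambda'_{o(e)})$; hypothesis~(3) is the bound on $||\lambda_{o(e)}-\lambda'_{o(e)}||_{G_{o(e)}}$ displayed above. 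The lemma then outputs $\rho'_{o(e)}$ with $(\rho'_{o(e)})^\#=\lambda'_{o(e)}$, $i_{\overline{e}}^*(\rho'_{o(e)})=i_e^*(\rho'_{t(e)})$, and $d_{G_{o(e)}}(\rho_{o(e)},\rho'_{o(e)})\prec_{\mathcal{G}}\delta$.

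Next I would define $\rho'$ on the stable letters $s_e$. For $e\in\vec{E}(T)$ put $\rho'(s_e):=I$; the relation $s_e$ is then trivial, the relation $s_e^{-1}i_e(g)s_e(i_{\overline{e}}(g))^{-1}=1$ holds because we arranged $i_e^*(\rho'_{t(e)})=i_{\overline{e}}^*(\rho'_{o(e)})$, and $||\rho(s_e)-\rho'(s_e)||'_p<\delta$. For $e\in\vec{E}(\Gamma)\setminus\vec{E}(T)$, the $G_e$-representations $i_e^*(\rho'_{t(e)})$ and $i_{\overline{e}}^*(\rho'_{o(e)})$ have the same $\#$ (again since $(\deriv\lambda')_e=0$), hence are isomorphic, while $ad(\rho(s_e))(i_e^*(\rho'_{t(e)}))$ is $\prec_{\mathcal{G}}\delta$-close to $i_{\overline{e}}^*(\rho'_{o(e)})$ because $\rho$ is a $\delta$-almost representation and $\rho'_v$ is close to $\rho_v$; so Corollary~\ref{maincor} provides a unitary $T_e$ with $||T_e-I||'_p\prec_{\mathcal{G}}\delta$ conjugating $ad(\rho(s_e))(i_e^*(\rho'_{t(e)}))$ to $i_{\overline{e}}^*(\rho'_{o(e)})$, and $\rho'(s_e):=\rho(s_e)T_e$ satisfies the $e$-th relation and $||\rho(s_e)-\rho'(s_e)||'_p=||T_e-I||'_p\prec_{\mathcal{G}}\delta$. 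Since each $\rho'_v$ is an honest homomorphism, the data $(\rho'_v)_v$, $(\rho'(s_e))_e$ define a representation of $\overpione$ that factors through $\pione$; by construction $(\rho')^\#=\lambda'$ and $d_{S_{\mathcal{G}}}(\rho,\rho')\prec_{\mathcal{G}}\delta$, as desired.

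I expect the main obstacle to be purely bookkeeping: one must check that the parameter passed to Lemma~\ref{analoguelemma} at each stage can be taken of the form $C\delta$ with $C=C(\mathcal{G})$. This works out because distributing $||\lambda-\lambda'||_V$ onto a single vertex costs only the factor $|V(\Gamma)|$, the constants inside Lemma~\ref{converse}, Corollary~\ref{maincor} and Lemma~\ref{analoguelemma} are absolute, and the tree induction has depth at most $|V(\Gamma)|$, so the accumulated constant still depends only on $\mathcal{G}$. A secondary point of care is that the relations $R_{\mathcal{G}}$ must hold exactly rather than approximately, which is precisely why we use the ``on the nose'' conclusion $i^*(\rho')=\tau$ of Lemma~\ref{analoguelemma} and the honest unitary intertwiner of Corollary~\ref{maincor}.
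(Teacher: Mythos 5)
Your proposal is correct and follows essentially the same route as the paper: the same induction of the vertex blocks along the spanning tree via Lemma~\ref{analoguelemma} (with the identical choice of inputs $i_{\overline{e}}$, $\tau=i_e^*(\rho'_{t(e)})$, $\rho_{o(e)}$, $\lambda'_{o(e)}$), the same verification of its three hypotheses, and the same treatment of the stable letters — identity on tree edges and a unitary intertwiner from Corollary~\ref{maincor} multiplied onto $\rho(s_e)$ on the remaining edges. The bookkeeping issues you flag (spreading $||\lambda-\lambda'||_V$ over vertices at the cost of a factor $|V(\Gamma)|$, and the bounded depth of the tree induction) are exactly the ones the paper absorbs into the $\prec_{\mathcal{G}}$ notation.
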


\begin{proof}
We will construct a representation $\rho'$ of $\overpione$ such that $\rho'(R_{\mathcal{G}})=I$, 
so $\rho'$ can be considered as a representation of $\pione$. 
Let us note that for each $v\in V(\Gamma)$ we have ${||\lambda_v-\lambda'_v||_{G_v}\le\delta^p\cdot|V(\Gamma)|\cdot ||\lambda_v||_{G_v}}$ 
and for each $e\in E(T)$ we have $i_e^\#(\lambda'_{t(e)})=i_{\overline{e}}^\#(\lambda'_{o(e)})$.

We will start by constructing representations $\rho'_v$ of $G_v$ with $d_{G_{v}}(\rho_v,\rho'_v)\prec\delta$ and $(\rho'_v)^\#=\lambda'_{v}$ inductively using the spanning tree $T$.\\
\textit{Basis.}
Let us fix a vertex $v \in V(\Gamma)$. 
We can apply Lemma~\ref{analoguelemma} to the trivial inclusion of a one-element group into $G_{v}$, to the trivial representation of a one-element group, 
to $\rho_{v}$ and to $\lambda'_{v}\in\Theta^+_{G_{v}}$. We get a representation $\rho'_v\colon G_v\to U(W)$ such that 
$d_{G_{v}}(\rho_v,\rho'_v)\prec\delta$ and $(\rho'_v)^\#=\lambda'_{v}$. \\
\textit{Induction step.} Assume that $\rho'_{t(e)}\colon G_{t(e)}\to U(W)$ is already defined, but $\rho'_{o(e)}$ is not yet defined for some $e\in E(T)$. 
By the induction assumption we know that ${d_{G_{t(e)}}(\rho_{t(e)},\rho'_{t(e)})\prec \delta}$ and
${(\rho'_{t(e)})^\#=\lambda'_{t(e)}}$.
To construct $\rho'_{o(e)}$ we will apply Lemma~\ref{analoguelemma} to the inclusion $i_{\overline{e}}:G_e\to G_{o(e)}$, 
to the representation $ i_{e}^*(\rho'_{t(e)})$ of $G_e$, to $\rho_{o(e)}$ and to $\lambda'_{o(e)}$.  
As we mentioned already in the beginning of the proof, the second and the third conditions of Lemma \ref{analoguelemma} are satisfied. 
Moreover,
$$d_{G_e}(i_e^*(\rho'_{t(e)}), i_{\overline{e}}^*(\rho_{o(e)}))\le 
d_{G_e}(i_e^*(\rho'_{t(e)}), i_e^*(\rho_{t(e)})) +
d_{G_e}(i_e^*(\rho_{t(e)}),  i_{\overline{e}}^*(\rho_{o(e)})) \prec \delta$$
and hence the first condition is also satisfied.
The representation given by Lemma \ref{analoguelemma} satisfies\\
 ${d_{G_{o(e)}}(\rho_{o(e)},\rho'_{o(e)})\prec \delta}$ and $(\rho'_{o(e)})^\#=\lambda'_{o(e)}$, thus we proved the induction step.
Also, by the construction we have $i_e^*(\rho'_{t(e)})=i_{\overline{e}}^*(\rho'_{o(e)})$. 

To finish the proof we need to define operators $\rho'_e$ for $e \in \vec{E}(\Gamma)$, i.e. values of our new representation on $s_e$. For $e\in\vec{E}(T)$ we can define $\rho'_e\coloneqq I$. Then 
$$||\rho(s_e)-\rho'_e||'_p\le \delta.$$

For $e\in\vec{E}(\Gamma)\setminus \vec{E}(T)$ we already know that 
$$d_{G_e}\left(i_e^*(\rho_{t(e)}), i_e^*(\rho'_{t(e)})\right)\prec \delta,$$ 
$$d_{G_e}\left(i_{\overline{e}}^*(\rho_{o(e)}),i_{\overline{e}}^*(\rho'_{o(e)})\right) \prec \delta,$$ 
$$d_{G_e}\left(ad(\rho(s_e))( i_e^*(\rho_{t(e)})),i_{\overline{e}}^*(\rho_{o(e)})\right)\prec \delta,$$
hence
$$d_{G_e}\left(ad(\rho(s_e))( i_e^*(\rho'_{t(e)})),i_{\overline{e}}^*(\rho'_{o(e)})\right)\prec \delta.$$
We also know that 
$$(ad(\rho(s_e))( i_e^*(\rho'_{t(e)})))^\#=( i_e^*(\rho'_{t(e)}))^\#=(i_{\overline{e}}^*(\rho'_{o(e)}))^\#,$$
since $(\deriv \lambda')_e=( i_e^*(\rho'_{t(e)}))^\#-(i_{\overline{e}}^*(\rho'_{o(e)}))^\#=0$.
So we can apply Corollary \ref{maincor} and get $T$ such that 
$$ad(T)(ad(\rho(s_e))(i_e^*(\rho_{t(e)}'))=ad(T\rho(s_e))(i_e^*(\rho'_{t(e)}))=i_{\overline{e}}^*(\rho'_{o(e)})$$
and $||T-I||_p'\prec \delta$. 
Now we can define $\rho'_e:=T\rho(s_e)$. We have 
$$||\rho(s_e)-\rho'_e||_p' = ||T-I||_p' \prec \delta.$$ 
Collections of $\rho_v'$ and $\rho_e'$ define the representation  $\rho'\colon \overpione \to U(W)$ such that $d_{S_{\mathcal{G}}}(\rho,\rho')\prec_{\mathcal{G}} \delta$. Moreover, by the construction ${\rho'(R_{\mathcal{G}})=I}$, hence $\rho'$ can be considered as a representation of $\pione$. 
\end{proof}

Now we are ready to prove Theorem ~\ref{ourmain}. 

\begin{proof}[Proof of Theorem ~\ref{ourmain}]
Given a $\delta$-almost $\pione$-representation $\rho\colon\overpione\to U(W)$ we can consider $\lambda=\rho^\#$. 
By Proposition~\ref{analogue1} 
$$||\deriv(\lambda)||_E \prec \delta^p\cdot ||\lambda||_V.$$
By Lemma~\ref{analogue2} there exists $\lambda'' \in \Theta_V^+ \cap \ker \deriv$ such that 
$$||\lambda''-\lambda||_V\prec \delta^p \cdot||\lambda||_V  \\  \text{ and } \\ ||\lambda''||_V\le||\lambda||_V.$$
Let us note that $||\lambda||_V=\dim W$ and $||\lambda''||_V=||\lambda''_v||_{G_v}\in\mathbb{N}$ for all $v\in V$, since ${\lambda''\in\ker\deriv}$. We will construct $\lambda'\in\Theta^+_V\cap\deriv$ by 
$$\lambda'=\lambda''+(||\lambda||_V-||\lambda''||_V)\pi^\#,$$
where $\pi$ is a trivial one-dimensional representations of $\overpione$. 
We have $\lambda'\in\ker\deriv$, since $\pi^\#\in\ker\deriv$. Moreover, 
$$||\lambda'-\lambda||_V\le||\lambda''-\lambda||_V+(||\lambda||_V-||\lambda''||_V)\le 2||\lambda''-\lambda||_V\prec \delta^p\cdot||\lambda||_V.$$
So we can apply Proposition~\ref{analogue3} to $\lambda'$ and get the desired representation $$\rho'\colon \pi_1(\mathcal{G},T)\to U(W),$$
which satisfies $(\rho')^\#=\lambda'$ and $d_{S_{\mathcal{G}}}(\rho, \rho')\prec_{\mathcal{G}} \delta$. 
\end{proof}

\begin{cor}
Finitely generated virtually free groups are stable with respect to a normalized $p$-Schatten norm for $1 \leq p<\infty$. 
\end{cor}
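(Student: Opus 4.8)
The plan is to assemble the pieces already in place. Given a finitely generated virtually free group $G$, I would first apply Stallings' theorem to realize $G \cong \pione$ for some finite graph of groups $\mathcal{G}$ all of whose vertex groups $G_v$ are finite, with respect to a maximal spanning tree $T \subset \Gamma$. This reduces the statement to understanding the concrete presentation with generating set $S_{\mathcal{G}}$ and relation set $R_{\mathcal{G}}$.

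Next I would verify that $\overpione$ is itself stable with respect to the normalized $p$-Schatten norm. By definition $\overpione = *_{v \in V(\Gamma)} G_v * F(\{s_e\}_{e \in \vec{E}(\Gamma)})$. Each vertex group $G_v$ is finite, hence stable by the corollary on finite groups; the free factor $F(\{s_e\})$ is trivially stable, since it has no relations and so every asymptotic homomorphism is already a homomorphism; and stability with respect to the normalized $p$-Schatten norm is preserved under free products. Therefore $\overpione$ is stable.

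Then I would invoke Theorem~\ref{ourmain}, which asserts that the canonical epimorphism $\overpione \to \pione$ is stable with respect to the $p$-Schatten norm for every $1 \le p < \infty$; because stability of an epimorphism is independent of the choice of finite generating and relating sets, this applies with the distinguished data $(S_{\mathcal{G}}, R_{\mathcal{G}})$. Finally, by the remark that stability of $\overline{G}$ together with stability of an epimorphism $\overline{G} \to G$ implies stability of $G$, applied to $\overline{G} = \overpione$ and $G = \pione$, we conclude that $\pione \cong G$ is stable. This completes the argument.

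As for the difficulty: at this stage the corollary is essentially bookkeeping, so there is no genuine obstacle here — all the substance lies in Theorem~\ref{ourmain}, and hence in Propositions~\ref{analogue1} and~\ref{analogue3} and the cone lemmas feeding them. The only points that merit a moment's care are that stability passes to free products and that the notion of stability of an epimorphism does not depend on the presentation, both of which are recorded in the preliminaries and are used above without further comment.
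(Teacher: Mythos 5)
Your proposal is correct and follows exactly the route the paper takes: Stallings' decomposition, stability of $\overpione$ as a free product of finite (hence stable) groups and a free group, and then Theorem~\ref{ourmain} combined with the remark that a stable group plus a stable epimorphism yields stability of the quotient. The paper records precisely this reduction in the preliminaries just before stating Theorem~\ref{ourmain}, so nothing further is needed.
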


Recall that a character of a group $G$ is a positive definite function on $G$ which is constant on conjugacy classes and takes value 1 at the unit.  A character $\tau$ is called embeddable if it factorizes through a homomorphism to a tracial ultraproduct of matrices, that is if there is a non-trivial ultrafilter $\alpha$ on $\mathbb{N}$ and a homomorphism $f\colon G \to U\left( \prod \limits_{n\in \mathbb{N}}^{\alpha} (M_n(\mathbb{C}), tr_n)  \right)$ such that $\tau_{\alpha} \circ f=\tau$, where $\tau_{\alpha}$ is a canonical trace on $\prod \limits_{n\in \mathbb{N}}^{\alpha} (M_n(\mathbb{C}), tr_n) $.
Using the necessary condition for the Hilbert-Schmidt stability (see Theorem 3 in \cite{hadwin2018stability}) we get the following corollary.

\begin{cor}
Each embeddable character of a finitely generated virtually free group is a pointwise limit of traces of finite-dimensional representations.
\end{cor}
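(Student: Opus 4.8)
The plan is to combine the newly established Hilbert-Schmidt stability of finitely generated virtually free groups (the previous corollary, specialized to $p=2$) with the known necessary condition for Hilbert-Schmidt stability from \cite{hadwin2018stability}. Recall that the latter says: if $G$ is Hilbert-Schmidt stable, then every embeddable character of $G$ is a pointwise limit of traces of finite-dimensional representations. So there is essentially nothing left to do except invoke the two results in sequence. I would phrase the argument as follows.

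\begin{proof}
Let $G$ be a finitely generated virtually free group. By the previous corollary with $p=2$, $G$ is stable with respect to the normalized Hilbert-Schmidt norm. By Theorem 3 in \cite{hadwin2018stability}, Hilbert-Schmidt stability implies that every embeddable character of $G$ is a pointwise limit of traces of finite-dimensional representations. This is exactly the claimed statement.
\end{proof}

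The only point that requires a little care --- and the step I would flag as the "main obstacle," though it is really a matter of bookkeeping rather than a genuine difficulty --- is matching conventions between the present paper and \cite{hadwin2018stability}: one must check that the notion of Hilbert-Schmidt stability used here (for sequences of asymptotic homomorphisms into $U(W_n)$ with $W_n$ finite-dimensional, measured in $\|\cdot\|'_2$) coincides with the notion for which the necessary condition in \cite{hadwin2018stability} is stated, and that "embeddable character" as defined in the paragraph preceding the corollary is the same object that appears there. Since both are the standard formulations (asymptotic homomorphisms to unitary groups of matrix algebras with the normalized trace, and characters factoring through the tracial matrix ultraproduct $\prod^\alpha (M_n(\mathbb{C}),\mathrm{tr}_n)$), this identification is immediate, and no further argument is needed. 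One could optionally remark that flexible stability would also suffice here, but the honest (non-flexible) stability already proven is more than enough.
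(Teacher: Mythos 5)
Your proposal is correct and matches the paper's argument exactly: the paper derives this corollary by combining the Hilbert--Schmidt stability of finitely generated virtually free groups (the preceding corollary with $p=2$) with the necessary condition from Theorem 3 of \cite{hadwin2018stability}. Your remark about matching conventions is reasonable diligence but, as you note, requires no additional argument.
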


\section*{Acknowledgements}
We thank Tatiana Shulman, Vadim Alekseev and Andreas Thom for helpful comments and remarks. The first author was supported by the Israel Science Foundation grants \#575/16 and \#957/20.

\begin{bibdiv}
\begin{biblist}
\bib{akhtiamov2020uniform}{article}{
  title={On uniform Hilbert Schmidt stability of groups},
  author={Danil Akhtiamov and Alon Dogon},
  journal={Proc. Amer. Math. Soc.},
  volume={150},
  pages={1799--1809},
  year={2022}
}

\bib {becker2020group}{article}{
  title={Group stability and Property (T)},
  author={Oren Becker and  Alexander Lubotzky},
  journal={Journal of Functional Analysis},
  volume={278},
  number={1},
  pages={108--298},
  year={2020},
  publisher={Elsevier}
}
\bib{de2019operator}{article}{
  title={Operator algebraic approach to inverse and stability theorems for amenable groups},
  author={De Chiffre, Marcus},
  author={Ozawa, Narutaka},
  author={Thom, Andreas},
  journal={Mathematika},
  volume={65},
  number={1},
  pages={98--118},
  year={2019},
  publisher={Wiley Online Library}
}

\bib{eilers2020c}{article}{
  title={$C^*$-stability of discrete groups},
  author={ Eilers, S{\o}ren},
  author={ Shulman, Tatiana},
  author={ S{\o}rensen, Adam}
  journal={Advances in Mathematics},
  volume={373},
  pages={107324},
  year={2020},
  publisher={Elsevier}
}

\bib{gowers2017inverse}{article}{
  title={Inverse and stability theorems for approximate representations of finite groups},
  author={ William Timothy Gowers and  Omid Hatami},
  journal={Sbornik: Mathematics},
  volume={208},
  number={12},
  pages={1784},
  year={2017},
  publisher={IOP Publishing}
}

\bib{hadwin2018stability}{article}{
  title={Stability of group relations under small Hilbert--Schmidt perturbations},
  author={Don Hadwin  and Tatiana Shulman },
  journal={Journal of Functional Analysis},
  volume={275},
  number={4},
  pages={761--792},
  year={2018},
  publisher={Elsevier}
}

\bib{ioana2019ii1}{article}{
  title={II1 factors with exotic central sequence algebras},
  author={Adrian Ioana and Pieter Spaas},
  journal={Journal of the Institute of Mathematics of Jussieu},
  pages={1--26},
  year={2019},
  publisher={Cambridge University Press}
}

\bib{lazarovich2021virtually}{article}{
  title={Virtually free groups are stable in permutations},
  author={Nir Lazarovich and Arie Levit},
  journal={arXiv preprint arXiv:2103.05583},
  year={2021}
}

\bib{stallings1968torsion}{article}{
  title={On torsion-free groups with infinitely many ends},
  author={John Robert Stallings},
  journal={Annals of Mathematics},
  pages={312--334},
  year={1968},
  publisher={JSTOR}
}


\end{biblist}
\end{bibdiv}

\Addresses

\end{document}